\documentclass[11pt]{article}
\usepackage[T1]{fontenc}
\usepackage[english]{babel}
\usepackage{csquotes}
\usepackage{amsmath,amssymb}

\usepackage[margin=1in]{geometry}
\usepackage{amsmath,amssymb,amsthm,mathtools}
\usepackage{mathrsfs}
\usepackage{enumitem}
\usepackage{hyperref}
\usepackage{float}
\usepackage[nameinlink,capitalize]{cleveref}

\usepackage{tikz}
\usetikzlibrary{arrows.meta}

\numberwithin{equation}{section}

\newtheorem{theorem}{Theorem}[section]
\newtheorem{lemma}[theorem]{Lemma}
\newtheorem{proposition}[theorem]{Proposition}
\newtheorem{corollary}[theorem]{Corollary}
\newtheorem{assumption}[theorem]{Assumption}

\theoremstyle{remark}
\newtheorem{remark}[theorem]{Remark}

\newcommand{\R}{\mathbb R}

\title{Analysis of a Conforming Finite Element Method for Second-Harmonic Generation Scattering}
\author{
Ansh Desai\footnote{Department of Mathematics, University of Chicago, Chicago, IL 60637. email: \href{mailto:ansh@uchicago.edu}{ansh@uchicago.edu}.}
\and Peter Monk\footnote{Department of Mathematical Sciences, University of Delaware, Newark, DE 19716. email: \href{mailto:monk@udel.edu}{monk@udel.edu}.}}
\date{}

\begin{document}
\maketitle

\begin{abstract}
In the frequency domain, nonlinear acoustic and electromagnetic wave propagation can,
in certain regimes, be modeled by second-harmonic generation systems consisting of two
coupled nonlinear Helmholtz equations. We analyze a conforming finite element
approximation of a scalar second-harmonic generation scattering problem truncated using an
exact Dirichlet-to-Neumann boundary condition. The discretization uses continuous
piecewise polynomial finite elements of degree \(p\). Under small-data
conditions involving the incident-field and nonlinear susceptibilities, we prove existence and uniqueness of the continuous and discrete nonlinear
solutions in a prescribed small ball. In the same regime, we derive quasi-optimal a priori \(H^1\)-error estimates by a
nonlinear C\'ea-type argument combining linear Galerkin quasi-optimality with
small-data stability estimates for the coupled nonlinearities. We also analyze the convergence of the fixed-point iteration used
to compute the discrete nonlinear solution and quantify the combined effects of finite
element approximation and nonlinear iteration error. A manufactured-solution experiment illustrates the predicted finite
element convergence rates, while additional PML-based scattering computations demonstrate the behavior of the nonlinear
fixed-point solver in two and three dimensions.
\end{abstract}

\section{Introduction}
\label{sec:introduction}
Second-harmonic generation (SHG) is a nonlinear wave phenomenon in which a monochromatic field at a fundamental angular frequency $\omega$ interacts with a nonlinear medium and generates a second-harmonic field at angular frequency $2\omega$.  In the frequency domain, the fields at the two frequencies are coupled: the second harmonic is driven by a quadratic source involving two copies of the fundamental field, while the fundamental field experiences a back-reaction involving the second harmonic and the complex conjugate of the fundamental field. For mathematical discussions of SHG models, see, for example, \cite{DobsonBao,Cakoni3D,AcostaUhlmann,jiang2026}. For an engineering view of nonlinear optics and applications, see~\cite{powers}.

The purpose of this paper is to prove error estimates for a conforming finite element approximation of a model scalar SHG scattering problem. We consider a bounded nonlinear material occupying a domain $D\subset\mathbb R^d$, with $d\in \{2,3\}$, illuminated by a known incident plane wave $u^i$ at the fundamental frequency. The interaction with the nonlinear medium produces a scattered fundamental field $u_1^s$ and a generated second-harmonic field $u_2$. To obtain a bounded-domain formulation suitable for finite element analysis, we truncate the exterior domain with an exact Dirichlet-to-Neumann (DtN) map. This provides a transparent boundary condition on the artificial boundary and removes any truncation error from the analysis, allowing us to focus on the nonlinear SHG coupling and its finite element approximation.

The SHG model arises naturally in electromagnetism. In
\cite{DobsonBao}, Bao and Dobson analyze a one-dimensional SHG model for
a nonlinear optical film and prove existence of a solution by a
fixed-point argument, but do not analyze a finite element discretization.
Cakoni et al.~\cite{Cakoni3D} discuss the derivation of an SHG scattering
model from Maxwell's equations, leading to a coupled system of nonlinear
Helmholtz equations of the form considered here. Their analysis concerns
conditions under which the second-harmonic response does not propagate
outside the nonlinear material, whereas our interest is in the numerical
approximation of the resulting scattering problem. Quadratic
nonlinearities also arise in acoustics. For example, Acosta and
Uhlmann~\cite{AcostaUhlmann} consider a time-domain Westervelt-type
equation for acoustic propagation in biological tissue; a
frequency-domain approximation of such a model leads to a coupled
second harmonic system related to the one studied here.

Earlier numerical studies treated the fully coupled SHG system in one
spatial dimension. Yuan~\cite{Yuan2009} combined a standard finite
element discretization with a fixed-point iteration to simulate
pump-depleted SHG in one-dimensional nonlinear photonic crystals. Bao,
Xu, and Yuan~\cite{BaoXuYuan2011} subsequently introduced an incremental
continuation strategy to improve the robustness of the iteration for
stronger incident fields.

Related finite element analyses have been developed for scalar nonlinear
Helmholtz equations involving Kerr-type pointwise nonlinearities (for example \cite{WuZou2018,Verfuerth2024,angermann2023,angermann2023a}). These studies are not directly applicable to the SHG system, which is nonlocal after eliminating the second-harmonic field.

The study most closely related to the present paper is that of Jiang and
Li~\cite{jiang2026}, who analyze nonlinear Helmholtz systems arising in
second- and third-harmonic generation. They truncate the exterior domain
by a perfectly matched layer and discretize the resulting problem using
a linear continuous interior penalty finite element method. Their analysis includes preasymptotic finite element error
estimates and convergence results for several nonlinear iterative
schemes. In contrast, our theoretical formulation uses 
a standard conforming Galerkin discretization and differs in the techniques
used to obtain error estimates.

The novel contribution of this paper is an error analysis of a conforming
finite element method for the coupled SHG scattering problem. Under a
product-type small-data condition involving the nonlinear
susceptibilities, the stability constants of the two linear DtN
problems, and the incident-field data, we prove existence and uniqueness
of both the continuous and discrete nonlinear problems by a unified
contraction argument. We then establish a quasi-optimal a priori error estimate in the
\(H^1\)-norm using a nonlinear C\'ea-type argument that 
works directly with the finite element solution. This is simpler than the
approach in~\cite{jiang2026} which involves analysis of an iterative solution. We also derive convergence estimates for the
discrete fixed-point iterates that separate finite element approximation
error from nonlinear iteration error. We restrict attention to a
nonlinearity supported in a volume domain; surface SHG models such as
those considered in~\cite{AndreyBogdanov} lie beyond the scope of this
work.

Our theoretical analysis uses the exact DtN map and therefore contains no
error from approximating that map. If the DtN operator is truncated in a
numerical implementation, the finite element estimates must be augmented
by the corresponding DtN approximation error; related estimates are given
in \cite{angermann2023,angermann2023a}. In the computations of
Section~\ref{sec:numerics}, we instead truncate the exterior domain with a
PML. An analysis of PML truncation for SHG systems is given by Jiang and
Li~\cite{jiang2026}.

The paper is organized as follows. In Section~\ref{sec:continuous}, we introduce the notation, formulate the coupled SHG equations, truncate the exterior scattering problem using the DtN map, and establish the continuous linear invertibility needed in the nonlinear analysis. In Section~\ref{sec:fem}, we state the assumptions on the conforming finite element spaces and prove existence and uniqueness of the continuous and discrete problems under small-data by a single contraction argument following the strategy of \cite{DobsonBao}. We then establish quasi-optimal finite element error estimates and corresponding estimates for the iterates of a fixed-point scheme. In Section~\ref{sec:numerics}, we present a numerical experiment verifying the predicted convergence rate, as well as scattering examples in two and three dimensions. The paper concludes in Section~\ref{sec:conclusion}.

\begin{figure}[t]
\centering

\usetikzlibrary{arrows.meta,calc}

\begin{tikzpicture}[
    >=Latex,
    every node/.style={font=\Large},
    fieldarrow/.style={->, line width=1.0pt},
    wavefront/.style={line width=0.8pt}
]

\definecolor{outerfill}{RGB}{238,244,233}
\definecolor{incidentred}{RGB}{220,35,35}
\definecolor{scatteredgreen}{RGB}{20,150,35}
\definecolor{secondarypurple}{RGB}{150,30,205}

\def\R{2.85}
\def\rD{1.20}
\fill[outerfill] (0,0) circle (\R);
\draw[dashed, line width=1.0pt, dash pattern=on 5pt off 5pt] (0,0) circle (\R);

\shade[inner color=white, outer color=gray!18] (0,0) circle (\rD);
\draw[line width=1.0pt] (0,0) circle (\rD);
\node at (0,0) {$D$};

\node at (0,-2.15) {$B_R$};
\node[anchor=west] at (-1,3.2) {$\Gamma_R=\partial B_R$};

\node[anchor=west] at (-4.95,2.45) {$u^i=e^{i\kappa_1 d_0\!\cdot\! x}$};

\begin{scope}[shift={(-4.05,1.10)}, rotate=-28]
    \draw[fieldarrow, incidentred] (0,0) -- (1.55,0);
    \foreach \x in {-1.00,-0.45,0.10,0.65}
        \draw[wavefront, incidentred] (\x,-0.65) -- (\x,0.65);
\end{scope}

\node at (-4.30,-0.10) {$d_0$};
\draw[fieldarrow, incidentred] (-4.42,0.28) -- (-3.72,-0.08);

\foreach \rad in {1.45,1.90,2.35,2.80} {
    \draw[wavefront, scatteredgreen]
        (28:\rad) arc[start angle=28,end angle=72,radius=\rad];
}

\draw[fieldarrow, scatteredgreen] (28:1.28) -- (28:4.15);
\node[anchor=west] at (3.8,2) {$u_1^s\; (\mbox{at wavenumber }\kappa_1$)};

\foreach \rad in {1.45,1.67,1.89,2.11,2.33,2.55,2.77} {
    \draw[wavefront, secondarypurple]
        (-30:\rad) arc[start angle=-30,end angle=18,radius=\rad];
}
\draw[fieldarrow, secondarypurple] (-32:1.28) -- (-32:4.35);
\node[anchor=west] at (3.8,-2.4) {$u_2\;(\mbox{at wavenumber }\kappa_2=2\kappa_1)$};

\end{tikzpicture}

\caption{Schematic of the SHG scattering problem. An incident wave $(u^i)$
at the fundamental wavenumber $\kappa_1$ impinges on the region $D$ containing the nonlinearity.
The interaction of this wave with the medium generates a scattered field ($u_1^s$) at the fundamental frequency and
a second-harmonic field ($u_2$) at wavenumber $\kappa_2=2\kappa_1$. The exact DtN
formulation truncates the exterior domain at the artificial boundary
$\Gamma_R=\partial B_R$.}
\label{fig:shg-scattering}
\end{figure}
\section{The continuous SHG problem}
\label{sec:continuous}

Since the one-dimensional model is treated in \cite{DobsonBao}, we assume that $d\in\{2,3\}$. Let \(D\subset \mathbb R^d\) be a bounded Lipschitz domain occupied by the nonlinear
material. We assume that the material coefficients at the fundamental and second-harmonic
frequencies are described by real-valued squared refractive indices \(n_1\) and
\(n_2\). To simplify wave propagation outside the nonlinear medium, we assume that the
background is homogeneous so that $
    n_1=n_2=1
    $ in $\mathbb R^d\setminus \overline D.
$
Because $D$ is bounded, we can choose \(R>0\) sufficiently large such that
$
    \overline D\subset B_R$,
where \(B_R\) is the ball of radius \(R\) centered at the origin (see Fig.~\ref{fig:shg-scattering}). The boundary of $B_R$ is denoted $\Gamma_R:=\partial B_R$. We use the same notation
\(n_j\), $j=1,2$, for the restrictions of the full-space coefficients to \(B_R\), and assume that there are constants $n_{\rm min}$ and $n_{\rm max}$ with
\[
    n_j\in L^\infty(B_R;\mathbb R),
    \qquad
    0< n_{\rm min}\le n_j(x)\le n_{\rm max}<\infty
    \quad\text{for a.e. }x\in B_R,
    \qquad j=1,2.
\]

Let \(c_0\) denote the background wave speed. If \(\omega>0\) is the angular frequency of
the fundamental mode, then the corresponding background wavenumbers are
\[
    \kappa_1:=\frac{\omega}{c_0},
    \qquad
    \kappa_2:=2\kappa_1=\frac{2\omega}{c_0}.
\]
We prescribe an incident plane wave at the fundamental wavenumber $\kappa_1$ propagating in the direction $d_0\in \mathbb{S}^{d-1}$ given by
\[
    u^i(x)=\exp(i\kappa_1 d_0\cdot x)
\]
This impinges on the scatterer $D$ containing the nonlinear medium. Let \(u_1\) denote the total field at
the fundamental frequency, \(u_1^s\) the corresponding scattered field, and \(u_2\) the
outgoing second-harmonic field, so that 
\begin{equation}
    u_1=u^i+u_1^s
    \qquad\text{in }\mathbb R^d.\label{u1tot}
\end{equation}
The full-space SHG scattering problem is to find $(u_1,u_2)\in H^1_{\rm loc}(\mathbb{R}^d)\times H^1_{\rm loc}(\mathbb{R}^d)$ such that
\begin{align}
    \Delta u_1+\kappa_1^2 n_1 u_1
    &=
    -\chi_1 \overline{u_1}u_2
    &\text{in }\mathbb R^d,
    \label{eq:full-shg-u1}\\
    \Delta u_2+\kappa_2^2 n_2 u_2
    &=
    -\chi_2 u_1^2
    &\text{in }\mathbb R^d,
    \label{eq:full-shg-u2}
\end{align}
together with the Sommerfeld radiation conditions
\begin{align}
    \lim_{r=|x|\to \infty}r^{(d-1)/2}
    \left(
        \frac{\partial u_1^s}{\partial r}
        -i\kappa_1 u_1^s
    \right)
    &=0,
    \label{eq:rad-u1}\\
    \lim_{r=|x|\to \infty}r^{(d-1)/2}
    \left(
        \frac{\partial u_2}{\partial r}
        -i\kappa_2 u_2
    \right)
    &=0,
    \label{eq:rad-u2}
\end{align}
holding uniformly in \(\widehat x=x/|x|\in \mathbb S^{d-1}\). Here, \(\overline{u_1}\) denotes the complex conjugate of \(u_1\). The susceptibility
coefficients \(
    \chi_j\in L^\infty(\mathbb R^d)\), \(j=1,2,
\)
are real-valued, nonnegative, scaled second-order nonlinear susceptibilities supported in $\overline{D}$.
Thus the nonlinear interaction is confined to the material region \(D\).
Equations \eqref{u1tot}-\eqref{eq:rad-u2} govern the exact
nonlinear scattering problem that we shall approximate (we refer to it as the ``continuous'' problem to distinguish it from the ``discrete'' finite element problem). This model problem is posed on all of $\mathbb{R}^d$ and needs to be truncated to a bounded domain for finite element analysis.  For simplicity, we do this next using the Dirichlet-to-Neumann (DtN) map.

To define the DtN map, let \(k>0\) and \(g\in H^{1/2}(\Gamma_R)\). Let
\(w_g\) denote the outgoing solution of the exterior Helmholtz problem
\begin{align}
    \Delta w_g+k^2w_g&=0
    &&\text{in }\mathbb R^d\setminus \overline{B_R},\label{wg1}\\
    w_g&=g
    &&\text{on }\Gamma_R,\\
    \lim_{r=|x|\to\infty}
    r^{(d-1)/2}
    \left(
        \frac{\partial w_g}{\partial r}-ikw_g
    \right)&=0.\label{wg-bc}
\end{align}
This exterior problem is well posed for each \(k>0\) \cite[Theorem 2.6.6]{Nedelec}. The outgoing Dirichlet-to-Neumann map is defined by
\[
    T_k:H^{1/2}(\Gamma_R)\to H^{-1/2}(\Gamma_R),
    \qquad
    T_kg:=\partial_\nu w_g|_{\Gamma_R},
\]
where \(\nu\) denotes the unit outward normal to \(B_R\). For each $k>0$, the DtN map is well-defined and bounded. For a more detailed discussion of the DtN map, we refer to \cite{grasslee2025dirichlettoneumannoperatorhelmholtzproblem}.
Here and below, we set
\[
    V:=H^1(B_R;\mathbb C),
    \qquad
    \|u\|_V:=\|u\|_{H^1(B_R)}.
\]
The trace operator is denoted by
\(
    \gamma:V\to H^{1/2}(\Gamma_R).
\)
Throughout, \(V'\) denotes the anti-dual of \(V\), i.e., the space of
continuous conjugate-linear functionals on \(V\). The duality pairing
\(\langle F,v\rangle\) is linear in \(F\) and conjugate-linear in \(v\). We also denote the conjugate-linear duality pairing between
\(H^{-1/2}(\Gamma_R)\) and \(H^{1/2}(\Gamma_R)\) by
\(\langle\cdot,\cdot\rangle_{\Gamma_R}\).

We now restrict the full-space fields in \eqref{eq:full-shg-u1}-\eqref{eq:rad-u2} to \(B_R\). Since \(n_j=1\) and
\(\chi_j=0\) outside \(D\), the exterior fields solve homogeneous Helmholtz
equations in \(\mathbb R^d\setminus \overline{B_R}\). The radiation conditions
can therefore be encoded exactly by DtN maps, and we can eliminate the scattered field. The full-space
scattering problem \eqref{eq:full-shg-u1}-\eqref{eq:rad-u2} is equivalent, in \(B_R\), to the following truncated
problem on $B_R$: find the total fields $(u_1,u_2)\in V\times V$ such that:
\begin{align}
    \Delta u_1+\kappa_1^2n_1u_1
    &=
    -\chi_1\overline{u_1}u_2
    &&\text{in }B_R,
    \label{eq:u1strong}\\
    \Delta u_2+\kappa_2^2n_2u_2
    &=
    -\chi_2u_1^2
    &&\text{in }B_R,
    \label{eq:u2strong}\\
    \partial_\nu u_1
    &=
    T_{\kappa_1}\gamma(u_1-u^i)+\partial_\nu u^i
    &&\text{on }\Gamma_R,
    \label{eq:u1-dtn-bc}\\
    \partial_\nu u_2
    &=
    T_{\kappa_2}\gamma u_2
    &&\text{on }\Gamma_R.
    \label{eq:u2-dtn-bc}
\end{align}
Conversely, any solution of
\eqref{eq:u1strong}-\eqref{eq:u2-dtn-bc} can be extended to a solution of
\eqref{eq:full-shg-u1}-\eqref{eq:rad-u2}. Specifically, solve
\eqref{wg1}-\eqref{wg-bc} with
\(g=\gamma(u_1-u^i)\) and \(k=\kappa_1\) for the scattered fundamental
field, and with \(g=\gamma u_2\) and \(k=\kappa_2\) for the
second-harmonic field. The boundary conditions
\eqref{eq:u1-dtn-bc}-\eqref{eq:u2-dtn-bc} ensure continuity of the
normal derivatives across \(\Gamma_R\).

Since \(T_{\kappa_1}\) is linear, the first boundary condition \eqref{eq:u1-dtn-bc}
may be written as
\[
    \partial_\nu u_1
    =
    T_{\kappa_1}\gamma u_1+f_R
    \qquad\text{on }\Gamma_R,
\]
where
\[
    f_R:=\partial_\nu u^i-T_{\kappa_1}(\gamma u^i)
    \in H^{-1/2}(\Gamma_R).
\]
We define the corresponding boundary functional
\[
    \ell(\xi)
    :=
    \langle f_R,\gamma\xi\rangle_{\Gamma_R},
    \qquad \xi\in V.
\]
Since \(u^i\) is smooth and \(T_{\kappa_1}\) is bounded, we have
$
    \ell\in V'.
$

Multiplying \eqref{eq:u1strong} and \eqref{eq:u2strong} by the complex conjugate of smooth test functions,
integrating by parts over \(B_R\), and using the DtN boundary conditions
\eqref{eq:u1-dtn-bc}-\eqref{eq:u2-dtn-bc}, we obtain the weak formulation of the SHG scattering problem:
find
\(
    (u_1,u_2)\in V\times V
\)
such that
\begin{align}
    a_1(u_1,\xi)
    &=
    \int_{B_R}
        \chi_1\overline{u_1}u_2\overline{\xi}\,dx
    +
    \ell(\xi)
    &&\forall \xi\in V,
    \label{eq:continuous-shg-1}\\
    a_2(u_2,\eta)
    &=
    \int_{B_R}
        \chi_2u_1^2\overline{\eta}\,dx
    &&\forall \eta\in V.
    \label{eq:continuous-shg-2}
\end{align}
Here, for \(j=1,2\), we define the sesquilinear forms
\(a_j:V\times V\to\mathbb C\) by
\begin{align}
    a_1(w,\xi)
    &:=
    \int_{B_R}
    \left(
        \nabla w\cdot\nabla\overline{\xi}
        -
        \kappa_1^2n_1w\overline{\xi}
    \right)\,dx
    -
    \langle T_{\kappa_1}\gamma w,\gamma\xi\rangle_{\Gamma_R},
    \label{eq:a1-cont}\\
    a_2(w,\eta)
    &:=
    \int_{B_R}
    \left(
        \nabla w\cdot\nabla\overline{\eta}
        -
        \kappa_2^2n_2w\overline{\eta}
    \right)\,dx
    -
    \langle T_{\kappa_2}\gamma w,\gamma\eta\rangle_{\Gamma_R}.
    \label{eq:a2-cont}
\end{align}

The continuous and discrete nonlinear problems will be analyzed simultaneously
in Theorem~\ref{thm:continuous-discrete-small-data}. In preparation for that
argument, we record three properties of the continuous problem: boundary
unique continuation for the linear Helmholtz operators, estimates for the quadratic coupling terms, and invertibility of the linear Helmholtz operators.

The uniqueness argument for the linear problem uses the following standard boundary unique-continuation property.
\begin{lemma}[Boundary unique continuation]
\label{lem:boundary-ucp}
Let \(d\in\{2,3\}\), let \(n_j\in L^\infty(B_R)\), and fix
\(j\in\{1,2\}\). Suppose that \(u\in V\) satisfies
\[
        \Delta u+\kappa_j^2 n_j u=0
        \quad\text{in }B_R
\]
in the sense of distributions, and that
$
        \gamma u=0,
        $ and $
        \partial_\nu u=0
        \quad\text{on }\Gamma_R
$
in the trace sense. Then \(u=0\) in \(B_R\).
\end{lemma}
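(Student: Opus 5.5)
The plan is to extend \(u\) by zero outside \(B_R\) and then apply the unique continuation principle for Helmholtz-type equations with bounded potential on the connected domain \(B_{R'}\), for some \(R'>R\).

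First, define \(\tilde u:=u\) in \(B_R\) and \(\tilde u:=0\) in \(B_{R'}\setminus\overline{B_R}\). Since \(\gamma u=0\), the zero extension satisfies \(\tilde u\in H^1(B_{R'})\) with \(\nabla\tilde u=\chi_{B_R}\nabla u\). Extend \(n_j\) by \(1\) (any bounded extension will do) and call the result \(\tilde n_j\in L^\infty(B_{R'})\).

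Next, check that \(\Delta\tilde u+\kappa_j^2\tilde n_j\tilde u=0\) in \(\mathcal D'(B_{R'})\). For \(\varphi\in C_c^\infty(B_{R'})\) we compute
\[
\int_{B_{R'}}\bigl(-\nabla\tilde u\cdot\nabla\varphi+\kappa_j^2\tilde n_j\tilde u\varphi\bigr)\,dx
=\int_{B_R}\bigl(-\nabla u\cdot\nabla\varphi+\kappa_j^2 n_j u\varphi\bigr)\,dx .
\]
Since \(u\in H^1(B_R)\) and \(\Delta u=-\kappa_j^2n_ju\in L^2(B_R)\), the generalized Green formula holds with the normal trace \(\partial_\nu u\in H^{-1/2}(\Gamma_R)\). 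It turns the right-hand side into
\[
\int_{B_R}(\Delta u+\kappa_j^2n_ju)\varphi\,dx-\langle\partial_\nu u,\gamma\varphi\rangle_{\Gamma_R}=0,
\]
because the equation holds in \(B_R\) and \(\partial_\nu u=0\). This step is where both Cauchy data hypotheses are used: \(\gamma u=0\) gives the \(H^1\) gluing, and \(\partial_\nu u=0\) removes the interface term. Also, \(\tilde u\) vanishes on the nonempty open set \(B_{R'}\setminus\overline{B_R}\).

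Finally, invoke the weak unique continuation property. For \(H^1_{\rm loc}\) solutions of \(\Delta w+Vw=0\) with \(V\in L^\infty\) in dimensions \(d\le 3\) (indeed any \(d\)), a solution vanishing on an open subset of a connected domain vanishes identically. This follows from Carleman estimates, e.g.\ Jerison--Kenig, or the classical results for Lipschitz principal part; see also Colton--Kress for the Helmholtz setting. Elliptic regularity gives \(\tilde u\in H^2_{\rm loc}\), so these theorems apply, and hence \(\tilde u\equiv0\) in \(B_{R'}\), so \(u=0\) in \(B_R\).

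The main obstacle is only the citation of a unique continuation theorem valid for merely \(L^\infty\) coefficients \(n_j\). Since the potential \(\kappa_j^2\tilde n_j\) enters only at zeroth order with a constant Laplacian, the standard Carleman-based result for bounded potentials applies directly, and no further work is needed beyond the distributional gluing above.
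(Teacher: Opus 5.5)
Your proof is correct. The zero extension lies in \(H^1(B_{R'})\) because \(\gamma u=0\). The generalized Green formula, together with \(\partial_\nu u=0\), shows that it solves \(\Delta\tilde u+\kappa_j^2\tilde n_j\tilde u=0\) in \(\mathcal D'(B_{R'})\). Weak unique continuation for bounded potentials on the connected set \(B_{R'}\) then gives \(\tilde u\equiv0\).

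The paper does not prove this lemma; it quotes it as standard. Your argument is the standard one it relies on, and the paper uses the same Cauchy-data gluing across \(\Gamma_R\) in Remark~\ref{rem:iterate-rate-s-one}.
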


In addition, we recall that by using the Sobolev embedding theorem~(see for example \cite[Corollary 9.14]{brezis}), we have the embedding $H^1(B_R)\hookrightarrow L^4(B_R)$ when $d\in \{2,3\}$, and we obtain that
$$
    \left|
    \int_{B_R}wz\overline{\phi}\,dx
    \right|
    \le
    \|w\|_{L^4(B_R)}
    \|z\|_{L^4(B_R)}
    \|\phi\|_{L^2(B_R)}\le C_{\rm prod}\|w\|_V\|z\|_V\|\phi\|_V.
$$
Taking the supremum over all $\phi\in V$ with $\|\phi\|_V=1$ gives the following product estimate:
\begin{lemma}[Product estimates]
\label{lem:product-estimate-continuous}
There exists a constant
$C_{\rm prod}>0$, depending only on $B_R$ and $d$, such that for all
$w,z,\widetilde w,\widetilde z\in V$,
\begin{align}
    \|wz\|_{V'}
    &\le
    C_{\rm prod}\|w\|_V\|z\|_V,
    \label{eq:product-estimate-1}\\
    \|wz-\widetilde w\widetilde z\|_{V'}
    &\le
    C_{\rm prod}
    \left(
        \|w\|_V\|z-\widetilde z\|_V
        +
        \|\widetilde z\|_V\|w-\widetilde w\|_V
    \right).
    \label{eq:product-estimate-2}
\end{align}
The same estimates hold if any factor is replaced by its complex conjugate.
\end{lemma}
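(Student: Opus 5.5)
The estimate \eqref{eq:product-estimate-1} follows from the trilinear bound recorded just before the lemma. For $w,z\in V$, we regard $wz$ as the conjugate-linear functional $\phi\mapsto\int_{B_R}wz\overline{\phi}\,dx$ on $V$. By H\"older's inequality with exponents $(4,4,2)$,
\[
\left|\int_{B_R}wz\overline{\phi}\,dx\right|\le\|w\|_{L^4(B_R)}\|z\|_{L^4(B_R)}\|\phi\|_{L^2(B_R)}.
\]
The Sobolev embedding $H^1(B_R)\hookrightarrow L^4(B_R)$ holds for $d\in\{2,3\}$ because $B_R$ is a bounded Lipschitz (indeed smooth) domain. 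Let $C_S$ be its constant and use $\|\phi\|_{L^2}\le\|\phi\|_V$. Then
\[
\left|\int_{B_R}wz\overline{\phi}\,dx\right|\le C_S^2\|w\|_V\|z\|_V\|\phi\|_V .
\]
We set $C_{\rm prod}:=C_S^2$, which depends only on $B_R$ and $d$. Taking the supremum over $\|\phi\|_V=1$ gives \eqref{eq:product-estimate-1}.

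For \eqref{eq:product-estimate-2} we use the algebraic splitting
\[
wz-\widetilde w\widetilde z=w(z-\widetilde z)+(w-\widetilde w)\widetilde z .
\]
The map $(w,z)\mapsto wz\in V'$ is bilinear, so the triangle inequality in $V'$ applies. Using \eqref{eq:product-estimate-1} on each term gives
\[
\|wz-\widetilde w\widetilde z\|_{V'}\le C_{\rm prod}\bigl(\|w\|_V\|z-\widetilde z\|_V+\|\widetilde z\|_V\|w-\widetilde w\|_V\bigr).
\]
This is exactly the claimed bound.

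For the statement about complex conjugates, note that $|\overline{w}|=|w|$ pointwise. Hence $\|\overline w\|_{L^4}=\|w\|_{L^4}$ and $\|\overline w\|_V=\|w\|_V$, since conjugation commutes with weak differentiation. The H\"older step above only uses moduli, so the same argument goes through verbatim with any factor conjugated. For the difference estimate, we also use $\overline{w}-\overline{\widetilde w}=\overline{w-\widetilde w}$.

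There is no real obstacle here. The only point needing care is the validity of the $L^4$ embedding, which fails in dimensions $d\ge5$. In $d\in\{2,3\}$ it is subcritical, with critical exponent $2^*=6$ for $d=3$ and any finite exponent for $d=2$. We therefore cite the embedding result already quoted before the lemma.
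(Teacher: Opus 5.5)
Your proposal is correct and follows essentially the same route as the paper: H\"older with exponents $(4,4,2)$ plus the embedding $H^1(B_R)\hookrightarrow L^4(B_R)$ for $d\in\{2,3\}$, then taking the supremum over $\|\phi\|_V=1$. You also make explicit the splitting $wz-\widetilde w\widetilde z=w(z-\widetilde z)+(w-\widetilde w)\widetilde z$ and the conjugation invariance, which the paper leaves implicit but uses in the same form in its contraction argument.
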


Let $A_j:V\to V'$ be the operators induced by $a_j$, namely,
\begin{equation*}
        \langle A_jw,\phi\rangle:=a_j(w,\phi),
    \qquad w,\phi\in V,\; j=1,2.
\end{equation*}
We use the following standard well-posedness result for the linear problem.
\begin{proposition}[Linear invertibility]
\label{prop:linear-continuous-invertibility}
For $j\in \{1,2\}$,
$A_j:V\to V'$ is an isomorphism. Consequently, there exists $M_j>0$ such that
$$
    \|A_j^{-1}g\|_V\le M_j\|g\|_{V'}
    \qquad\forall g\in V'.
$$
\end{proposition}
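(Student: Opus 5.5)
We prove Proposition~\ref{prop:linear-continuous-invertibility} with the Fredholm alternative: we show that $A_j$ is a compact perturbation of a coercive operator, so it suffices to establish injectivity.

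\textbf{Step 1 (G\aa rding splitting).} Write $a_j = b_j + c_j$ with
\[
b_j(w,\phi):=\int_{B_R}\left(\nabla w\cdot\nabla\overline{\phi}+w\overline{\phi}\right)dx-\langle T_{\kappa_j}\gamma w,\gamma\phi\rangle_{\Gamma_R},
\qquad
c_j(w,\phi):=-\int_{B_R}(1+\kappa_j^2n_j)w\overline{\phi}\,dx .
\]
The standard sign property of the exterior DtN map, $-\mathrm{Re}\,\langle T_k g,g\rangle_{\Gamma_R}\ge 0$ for $g\in H^{1/2}(\Gamma_R)$, gives $\mathrm{Re}\,b_j(w,w)\ge\|w\|_V^2$. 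In $d=2,3$ this follows from the Hankel-function series representation of $T_k$ on the sphere, where the real parts of the Fourier multipliers are nonpositive; see \cite{Nedelec}. Boundedness of $b_j$ follows from the boundedness of $T_{\kappa_j}$ and of the trace operator $\gamma$. Lax--Milgram then shows that the operator $B_j:V\to V'$ induced by $b_j$ is an isomorphism.

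The operator $C_j$ induced by $c_j$ factors through the embedding $V\hookrightarrow L^2(B_R)$, which is compact by Rellich's theorem, since $n_j\in L^\infty$. Hence $C_j$ is compact, and $A_j=B_j+C_j$ is Fredholm of index zero.

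\textbf{Step 2 (Injectivity).} This is the main step. Suppose $a_j(u,\phi)=0$ for all $\phi\in V$.

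First take $\phi=u$ and look at the imaginary part. Since $n_j$ is real, the volume terms are real, so
\[
\mathrm{Im}\,\langle T_{\kappa_j}\gamma u,\gamma u\rangle_{\Gamma_R}=0 .
\]
Let $w$ be the outgoing extension of $\gamma u$. Green's identity on $B_\rho\setminus B_R$, combined with the radiation condition, gives
\[
\mathrm{Im}\,\langle T_k g,g\rangle_{\Gamma_R}=\lim_{\rho\to\infty}k\int_{|x|=\rho}|w|^2\,ds .
\]
This limit therefore vanishes, and Rellich's lemma yields $w=0$ outside $B_R$. Consequently $\gamma u=0$, and then $T_{\kappa_j}\gamma u=0$.

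Next, testing with $\phi\in C_c^\infty(B_R)$ shows that $\Delta u+\kappa_j^2n_ju=0$ in the sense of distributions. Taking general $\phi\in V$ and using the generalized Green formula identifies $\partial_\nu u=T_{\kappa_j}\gamma u=0$ in $H^{-1/2}(\Gamma_R)$. Lemma~\ref{lem:boundary-ucp} now gives $u=0$.

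The delicate points are both in this step. We must invoke Rellich's lemma correctly for the exterior field. We must also make sure the Neumann trace is meaningful for $u\in V$ with $\Delta u\in L^2$; this holds because $\Delta u=-\kappa_j^2n_ju\in L^2(B_R)$.

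\textbf{Step 3 (Conclusion).} An injective Fredholm operator of index zero is surjective, so $A_j$ is a bijective bounded operator. By the bounded inverse theorem, $A_j^{-1}:V'\to V$ is bounded. Setting $M_j:=\|A_j^{-1}\|_{V'\to V}$ gives the stated estimate.
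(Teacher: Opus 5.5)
Your proof is correct and follows the route the paper sketches before omitting the details. A G{\aa}rding splitting based on the sign of $\mathrm{Re}\,\langle T_{\kappa_j}g,g\rangle_{\Gamma_R}$, together with Rellich compactness, gives a Fredholm operator of index zero; injectivity then follows from the outgoing identity for $\mathrm{Im}\,\langle T_{\kappa_j}\gamma u,\gamma u\rangle_{\Gamma_R}$ with Rellich's lemma (giving $\gamma u=0$), the generalized Green formula (giving $\partial_\nu u=0$), and Lemma~\ref{lem:boundary-ucp}, which is exactly what the paper's citations to Melenk--Sauter and Angermann supply.
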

Proposition~\ref{prop:linear-continuous-invertibility} is the standard
Fredholm well-posedness result for the linear Helmholtz problem with the
outgoing DtN boundary condition; see, for example,
\cite[Theorem~3.8]{MelenkSauter} and
\cite[Theorem~10]{angermann2023}. The usual proof combines a
G{\aa}rding inequality and compactness to obtain a Fredholm operator of
index zero. Injectivity follows from the outgoing DtN identity and
Lemma~\ref{lem:boundary-ucp}. We omit the details.
\begin{remark} \label{k-dep} The stability constants $M_j$ may depend on $\kappa_j$, $n_j$, and the scattering geometry.  In \cite{EsterhazyMelenk}, it is proved that, for a Lipschitz scatterer, there exist constants $\alpha,C>0$ independent of $\kappa_j>0$ such that $M_j\le C\kappa_j^\alpha$ as $\kappa_j\to\infty$. The authors further discuss geometric conditions under which this bound holds (the dependence can grow much more rapidly for more exotic scatterers such as trapping geometries).  Because our goal is to analyze the nonlinear problem, we do not trace the $\kappa_j$-dependence of constants. 
\end{remark}
With this notation, \eqref{eq:continuous-shg-1}-\eqref{eq:continuous-shg-2} are equivalent to
$$
    A_1u_1
    =
    \ell+\chi_1\overline{u_1}u_2,
    \qquad
    A_2u_2
    =
    \chi_2u_1^2,
$$
where the nonlinear terms are interpreted as elements of \(V'\) according
to Lemma~\ref{lem:product-estimate-continuous}. Under Proposition~\ref{prop:linear-continuous-invertibility}, the second equation may be eliminated, letting us write a scalar equation for \(u_1\):
\[
A_1u_1
=
\ell+\chi_1\overline{u_1}\,A_2^{-1}(\chi_2u_1^2).
\]
This form is useful for the fixed-point argument. It also shows why the problem is not a direct instance of the scalar framework in~\cite{angermann2023} where the nonlinear terms are generated by pointwise nonlinearities in a scalar field. Here, after eliminating \(u_2\), the nonlinear response contains the Helmholtz resolvent \(A_2^{-1}\). Consequently, the value of the reduced nonlinearity at a point depends on the global solution of the second-harmonic Helmholtz problem driven by \(u_1^2\). Although the coupled SHG system is local in \((u_1,u_2)\), the reduced scalar equation for \(u_1\) contains a nonlocal nonlinear operator.

Proposition~\ref{prop:linear-continuous-invertibility} supplies the
continuous linear stability estimate needed in the fixed-point argument.
Its discrete counterpart is recorded in
Proposition~\ref{prop:linear-galerkin-stability}, after which the two
nonlinear problems are treated together in
Theorem~\ref{thm:continuous-discrete-small-data}.

\section{Analysis of the finite element method}
\label{sec:fem}
We now analyze a conforming Galerkin approximation of the exact DtN truncated problem
\eqref{eq:continuous-shg-1}-\eqref{eq:continuous-shg-2}. Throughout this
section we retain the notation of Section~\ref{sec:continuous}. The theory below is for the exact DtN map on the auxiliary boundary. This assumption allows us to focus on the
nonlinear finite element error. If the DtN operators are replaced by finite
spherical-harmonic or trigonometric polynomial  truncations, then the final estimates must be augmented by
DtN truncation errors of the kind studied for linear exterior Helmholtz
problems in, for example,  \cite{Koyama}. If the curved artificial boundary is
approximated by a polygonal or polyhedral boundary, then additional geometric
consistency errors are present. Rather than a polygonal approximation, one may use mapped or curved
finite element spaces on the exact ball, as in the hp-DtN framework of \cite{MelenkSauter}. We do not develop those additional
terms in the error analysis here.

\subsection{The grid and the discrete variational formulation}
\label{subsec:grid-discrete-formulation}

Let $\{\mathcal T_h\}_{h>0}$ be a family of shape-regular conforming meshes of triangles ($d=2$) or tetrahedra ($d=3$) covering $B_R$  (of necessity, elements sharing edges or faces with $\partial B_R$ will be curvilinear), and
let
    $V_h\subset V$ 
be a conforming finite element space of piecewise polynomials of polynomial degree $p\ge 1$ (see \cite{MelenkSauter}). For the remainder of the paper, we assume
that the family $\{V_h\}$ is dense in $V$, i.e.,
$$
    \inf_{z_h\in V_h}\|v-z_h\|_V\to 0
    \qquad\text{as }h\to0
    \qquad\forall v\in V.
$$
For convergence rates, we additionally assume the following  approximation
property.

\begin{assumption}
\label{ass:fe-approx}
There exists a constant $C_{\rm app}>0$, independent of $h$, such that for
every $s\in(0,p]$ and every $v\in H^{1+s}(B_R)$,
$$
    \inf_{z_h\in V_h}\|v-z_h\|_{V}
    \le
    C_{\rm app}h^s |v|_{H^{1+s}(B_R)}.
$$
\end{assumption}

\begin{remark}
Since $B_R$ has a curved boundary, Assumption~\ref{ass:fe-approx} should be
understood as an assumption on the construction of the conforming spaces $V_h\subset V$.  It
may be realized using an interface-fitted mesh~\cite{angermann2023a},  or patch-wise mapping~\cite{MelenkSauter,MelenkSauter1} since the outer boundary is either a circle or  sphere. 
\end{remark}
The discrete nonlinear problem is: find
$
    (u_{1,h},u_{2,h})\in V_h\times V_h
$
such that
\begin{align}
    a_1(u_{1,h},\xi_h)
    &=
    \int_{B_R}
        \chi_1\overline{u_{1,h}}u_{2,h}\overline{\xi_h}\,dx
    +
    \ell(\xi_h)
    &&\forall \xi_h\in V_h,
    \label{eq:discrete-shg-1}\\
    a_2(u_{2,h},\eta_h)
    &=
    \int_{B_R}
        \chi_2u_{1,h}^2\overline{\eta_h}\,dx
    &&\forall \eta_h\in V_h.
    \label{eq:discrete-shg-2}
\end{align}

\subsection{Continuous and discrete well-posedness}
\label{subsec:fem-existence}

We first recall the linear Galerkin stability and quasi-optimality statement
needed in the nonlinear estimates (see \cite[Theorem 4.3]{MelenkSauter}):

\begin{proposition}[Linear stability and quasi-optimality]
\label{prop:linear-galerkin-stability}
There exist constants $h_0>0$, $M_{\rm lin}>0$,
and $C_{\rm qo}>0$ such that for each $j\in\{1,2\}$ and every $0<h\le h_0$
the following hold:
\begin{enumerate}
\item For every $g_h\in V_h'$, the discrete linear problem
\begin{equation}
    a_j(w_h,\phi_h)
    =
    \langle g_h,\phi_h\rangle
    \qquad\forall \phi_h\in V_h\label{ajh}
\end{equation}
has a unique solution $w_h\in V_h$, and
\begin{equation}
    \|w_h\|_V
    \le
    M_{\rm lin}\|g_h\|_{V_h'}.
\label{discrete_stab}
\end{equation}

\item For every $g\in V'$, let $w\in V$ solve
$$
    a_j(w,\phi)=\langle g,\phi\rangle
    \qquad\forall \phi\in V.
$$
Let \(g_h:=g|_{V_h}\in V_h'\), and let \(w_h\in V_h\) be the
solution of \eqref{ajh} with right-hand side \(g_h\). Then we have the following quasi-optimal estimate:
$$
    \|w-w_h\|_V
    \le
    C_{\rm qo}
    \inf_{z_h\in V_h}\|w-z_h\|_V.
$$
\end{enumerate}
\end{proposition}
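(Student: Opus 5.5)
This follows the Schatz duality argument, applied to the sesquilinear forms $a_j$ with the DtN term, in the form used in \cite{MelenkSauter}. We work first with part 2 (quasi-optimality) and derive part 1 from it. The setup has three ingredients.

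First, $a_j$ is bounded on $V\times V$ with some constant $C_b$, since $T_{\kappa_j}$ is bounded. Second, $a_j$ satisfies a G\aa rding inequality. Using the known sign property $-\operatorname{Re}\langle T_{\kappa_j}g,g\rangle_{\Gamma_R}\ge 0$ of the outgoing DtN map, we have
\[
\operatorname{Re} a_j(v,v)\ge \|v\|_V^2-(1+\kappa_j^2 n_{\rm max})\|v\|_{L^2(B_R)}^2 .
\]
Third, by Proposition~\ref{prop:linear-continuous-invertibility}, the adjoint problem is well posed. For $f\in L^2(B_R)$ we let $z_f\in V$ solve
\[
a_j(\phi,z_f)=(\phi,f)_{L^2}\qquad\forall\phi\in V .
\]
This is a Helmholtz problem with the incoming DtN map, and its solution is $\overline{\cdot}$ of an outgoing problem, so it is uniquely solvable.

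The key quantity is the adjoint approximability
\[
\eta(V_h):=\sup_{f\in L^2\setminus\{0\}}\ \inf_{z_h\in V_h}\frac{\|z_f-z_h\|_V}{\|f\|_{L^2}} .
\]
The main step is to show $\eta(V_h)\to0$ as $h\to0$. Using only the density assumption (no elliptic regularity), this follows because the solution map $f\mapsto z_f$ is compact from $L^2$ into $V$. Indeed, it factors as the bounded map $V'\to V$ composed with the compact embedding $L^2\hookrightarrow V'$. Pointwise convergence of the best-approximation error, $\inf_{z_h\in V_h}\|v-z_h\|_V\to0$ for each $v\in V$, then becomes uniform on the compact image of the unit ball. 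This is a standard $\epsilon$-net argument, and I expect it to be the main (though mild) obstacle. Once $\eta(V_h)\to0$ is in hand, $h_0$ is chosen so that $C_b\,\eta(V_h)\le \tfrac12\,(1+\kappa_j^2 n_{\rm max})^{-1/2}$, and so on, for both $j=1,2$.

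For quasi-optimality, let $e=w-w_h$. Galerkin orthogonality gives $a_j(e,\phi_h)=0$ for all $\phi_h\in V_h$. Taking $f=e$ in the adjoint problem, the Aubin--Nitsche argument gives
\[
\|e\|_{L^2}^2=a_j(e,z_e-z_h)\le C_b\|e\|_V\,\eta(V_h)\,\|e\|_{L^2},
\]
so $\|e\|_{L^2}\le C_b\eta(V_h)\|e\|_V$. Next, combine G\aa rding with orthogonality:
\[
\|e\|_V^2\le \operatorname{Re} a_j(e,w-z_h)+c\|e\|_{L^2}^2\le C_b\|e\|_V\|w-z_h\|_V+c\,C_b^2\eta(V_h)^2\|e\|_V^2 .
\]
For $h\le h_0$ the last term is absorbed, giving $\|e\|_V\le 2C_b\inf_{z_h}\|w-z_h\|_V$, so $C_{\rm qo}=2C_b$.

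For part 1, uniqueness and existence are equivalent because $V_h$ is finite dimensional. For the stability bound, let $g_h\in V_h'$ and suppose $w_h$ solves \eqref{ajh}. Running the same duality argument directly on $w_h$ (with $f=w_h$, $z_h$ the best approximation of $z_{w_h}$) gives
\[
\|w_h\|_{L^2}^2=\overline{a_j(w_h,z_{w_h})}\text{-type identity}\le \|g_h\|_{V_h'}\|z_h\|_V+C_b\|w_h\|_V\,\eta\,\|w_h\|_{L^2}.
\]
Combining this with $\|z_h\|_V\lesssim M_j\|w_h\|_{L^2}$ and with G\aa rding, $\|w_h\|_V^2\le|\langle g_h,w_h\rangle|+c\|w_h\|_{L^2}^2$, and absorbing terms for small $h$, yields $\|w_h\|_V\le M_{\rm lin}\|g_h\|_{V_h'}$. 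The constant is uniform in $h\le h_0$ and is taken as the maximum over $j$. In particular, $g_h=0$ forces $w_h=0$, which gives uniqueness and hence existence.
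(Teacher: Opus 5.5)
Your argument is correct. It is essentially the Schatz duality argument (G{\aa}rding inequality via the sign of $\operatorname{Re}\langle T_{\kappa_j}g,g\rangle_{\Gamma_R}$, adjoint approximability $\eta(V_h)$, absorption once $\eta$ is small) that underlies \cite[Theorem~4.3]{MelenkSauter}, which the paper cites instead of giving a proof. The one real difference is that you obtain $\eta(V_h)\to0$ from the density assumption and the compactness of $f\mapsto z_f$ from $L^2(B_R)$ into $V$, rather than from regularity-based approximation estimates. That gives exactly the qualitative $h_0$, $M_{\rm lin}$, $C_{\rm qo}$ claimed here, but unlike the cited wavenumber-explicit analysis it says nothing about how $h_0$ depends on $\kappa_j$ (cf.\ Remark~\ref{k-dep-2}).
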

\begin{remark} 
\label{k-dep-2}
The constant $M_{\rm lin}$ depends on the wavenumber $\kappa_j$ as well as the shape of the scatterer.  The constant $h_0$ also depends on $\kappa_j$.  As $\kappa_j$ increases, it is necessary to decrease $h_0$ to control the number of unknowns per wavelength, and further decreased to control pollution error~\cite{ihlenburg}.  Thus, the grid must be chosen based on the larger wavenumber $\kappa_2$ to resolve the second harmonic.
\end{remark}

Now we define some constants that appear in the upcoming analysis. Set
\begin{equation}
\label{eq:uniform-linear-stability-constants}
    \widehat M_j:=\max\{M_j,M_{\rm lin}\},
    \qquad j=1,2.
\end{equation}
For $r>0$, set
\begin{equation}
\label{eq:small-data-constants}
\begin{aligned}
    \alpha_r
    &:=\widehat M_1C_{\rm prod}\|\chi_1\|_{L^\infty(B_R)}r,
    &
    \beta_r
    &:=2\widehat M_2C_{\rm prod}\|\chi_2\|_{L^\infty(B_R)}r,\\
    \sigma_r
    &:=\frac{\beta_r r}{2},
    &
    \theta_r
    &:=\widehat M_1C_{\rm prod}\|\chi_1\|_{L^\infty(B_R)}\sigma_r
      =\frac{\alpha_r\beta_r}{2},\\
    q_r
    &:=\theta_r+\alpha_r\beta_r
      =\frac{3}{2}\alpha_r\beta_r.
\end{aligned}
\end{equation}
For $j=1,2$, define $A_{j,h}:V_h\to V_h'$ by
\[
    \langle A_{j,h}w_h,\varphi_h\rangle
    :=a_j(w_h,\varphi_h),
    \qquad w_h,\varphi_h\in V_h,
\]
and define $R_h:L^2(B_R)\to V_h'$ and
\(\ell_h\in V_h'\) by
\begin{equation}
\label{eq:discrete-data-operators}
    \langle R_hf,\varphi_h\rangle
    :=\int_{B_R}f\overline{\varphi_h}\,dx,
    \qquad
    \ell_h(\varphi_h):=\ell(\varphi_h).
\end{equation}
Thus \eqref{eq:discrete-shg-1}-\eqref{eq:discrete-shg-2} are equivalent to
\begin{equation}
\label{hequation}
    A_{1,h}u_{1,h}
    =\ell_h+R_h(\chi_1\overline{u_{1,h}}u_{2,h}),
    \qquad
    A_{2,h}u_{2,h}=R_h(\chi_2u_{1,h}^2).
\end{equation}

For $\rho>0$, introduce the rectangular balls
\begin{equation}
\label{eq:continuous-discrete-balls}
\begin{aligned}
    \mathcal B_\rho
    &:=\left\{(u,v)\in V\times V:
        \|u\|_V\le\rho,\ \|v\|_V\le\sigma_\rho\right\},\\
    \mathcal B_{\rho,h}
    &:=\left\{(u_h,v_h)\in V_h\times V_h:
        \|u_h\|_V\le\rho,\ \|v_h\|_V\le\sigma_\rho\right\}.
\end{aligned}
\end{equation}
\begin{theorem}[Continuous and discrete small-data well-posedness]
\label{thm:continuous-discrete-small-data}
Assume that the Galerkin stability estimate \eqref{discrete_stab} holds for
all $0<h\le h_0$. Let $\rho>0$ satisfy
\begin{equation}
\label{eq:small-data-radius}
    \widehat M_1\|\ell\|_{V'}+\theta_\rho\rho\le\rho,
    \qquad
    q_\rho<1.
\end{equation}
Then the continuous nonlinear problem
\eqref{eq:continuous-shg-1}-\eqref{eq:continuous-shg-2} has a unique
solution $(u_1,u_2)\in\mathcal B_\rho$. Moreover, for every
$0<h\le h_0$, the discrete nonlinear problem
\eqref{eq:discrete-shg-1}-\eqref{eq:discrete-shg-2} has a unique
solution $(u_{1,h},u_{2,h})\in\mathcal B_{\rho,h}$.
\end{theorem}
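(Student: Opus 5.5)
Both problems are treated by one contraction argument, following the strategy of \cite{DobsonBao}: we work in a space $X$ equal to $V\times V$ or $V_h\times V_h$ and use the solution operators $A_j^{-1}$ or $A_{j,h}^{-1}$. In both cases these have norm at most $\widehat M_j$, by Proposition~\ref{prop:linear-continuous-invertibility} and by \eqref{discrete_stab} with $0<h\le h_0$. For the discrete case we also need $\|R_h f\|_{V_h'}\le\|f\|_{V'}$ whenever $f\in L^2$ is viewed as an element of $V'$, which holds because $V_h\subset V$. Consequently Lemma~\ref{lem:product-estimate-continuous} transfers verbatim to $V_h'$, and $\|\ell_h\|_{V_h'}\le\|\ell\|_{V'}$. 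Define the map $\Phi(u,v)=(\tilde u,\tilde v)$ by
\[
\tilde v=A_2^{-1}(\chi_2u^2),\qquad \tilde u=A_1^{-1}\bigl(\ell+\chi_1\overline{u}\,\tilde v\bigr),
\]
with the discrete analogue using $A_{j,h}^{-1}$, $R_h$ and $\ell_h$. This is a Gauss--Seidel type map, and its fixed points are exactly the solutions of \eqref{eq:continuous-shg-1}--\eqref{eq:continuous-shg-2}, respectively \eqref{hequation}. We shall show that $\Phi$ maps $\mathcal B_\rho$ (resp.\ $\mathcal B_{\rho,h}$) into itself and is a contraction there. Since these sets are closed subsets of Banach spaces, Banach's fixed point theorem then gives existence and uniqueness in the ball.

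\textbf{Self-map.} Multiplying by $\chi_j$ costs at most a factor $\|\chi_j\|_{L^\infty}$ in the product estimates. If $\|u\|_V\le\rho$, then \eqref{eq:product-estimate-1} gives $\|\tilde v\|_V\le \widehat M_2C_{\rm prod}\|\chi_2\|_{L^\infty}\rho^2=\beta_\rho\rho/2=\sigma_\rho$. Next,
\[
\|\tilde u\|_V\le\widehat M_1\|\ell\|_{V'}+\widehat M_1C_{\rm prod}\|\chi_1\|_{L^\infty}\rho\,\sigma_\rho .
\]
Since $\widehat M_1C_{\rm prod}\|\chi_1\|_{L^\infty}\sigma_\rho=\theta_\rho$, this bound equals $\widehat M_1\|\ell\|_{V'}+\theta_\rho\rho$, which is at most $\rho$ by the first condition in \eqref{eq:small-data-radius}.

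\textbf{Contraction.} Take two points $(u,v),(u',v')$ in the ball. By \eqref{eq:product-estimate-2} applied to $u^2-u'^2$,
\[
\|\tilde v-\tilde v'\|_V\le\widehat M_2C_{\rm prod}\|\chi_2\|_{L^\infty}\,2\rho\|u-u'\|_V=\beta_\rho\|u-u'\|_V.
\]
For the first component, apply \eqref{eq:product-estimate-2} to the difference $\overline u\tilde v-\overline{u'}\tilde v'$ together with the conjugation remark:
\[
\|\tilde u-\tilde u'\|_V\le \widehat M_1C_{\rm prod}\|\chi_1\|_{L^\infty}\bigl(\rho\|\tilde v-\tilde v'\|_V+\sigma_\rho\|u-u'\|_V\bigr)\le(\alpha_\rho\beta_\rho+\theta_\rho)\|u-u'\|_V=q_\rho\|u-u'\|_V .
\]
The new pair depends only on $u$, so $\Phi$ is not literally a contraction in the product norm, because the $v$-component can expand by the factor $\beta_\rho$. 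We avoid this by contracting the reduced map $u\mapsto\tilde u$ on the closed ball $\{\|u\|_V\le\rho\}$, which is a $q_\rho$-contraction since $q_\rho<1$. Its fixed point $u$, together with $v=\tilde v(u)$, solves the system and lies in the rectangular ball.

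\textbf{Uniqueness.} This step needs the most care. Let $(u_1,u_2)\in\mathcal B_\rho$ be any solution. The second equation forces $u_2=A_2^{-1}(\chi_2u_1^2)$, so $u_1$ is a fixed point of the reduced map in $\{\|u\|_V\le\rho\}$. Uniqueness of that fixed point therefore gives uniqueness of the pair. The discrete case is word for word the same with $h\le h_0$.
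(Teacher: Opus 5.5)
Your proof is correct and is essentially the paper's argument. After the detour through the pair map $\Phi$, you contract the same reduced map $u\mapsto A_1^{-1}\bigl(\ell+\chi_1\overline{u}\,A_2^{-1}(\chi_2u^2)\bigr)$ (and its discrete analogue) on $\{\|u\|_V\le\rho\}$, with the same self-map bound $\widehat M_1\|\ell\|_{V'}+\theta_\rho\rho\le\rho$, the same Lipschitz constant $\alpha_\rho\beta_\rho+\theta_\rho=q_\rho$, and the same uniqueness reduction through $u_2=A_2^{-1}(\chi_2u_1^2)$; just drop your opening claim that $\Phi$ itself is a contraction on $\mathcal B_\rho$, since you rightly replace it.
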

\begin{remark}[Small data condition]
\label{rem:small-data-interpretation}
Let
\[
    a:=\widehat M_1\|\ell\|_{V'},
    \qquad
    \mu:=\widehat M_1\widehat M_2C_{\rm prod}^2
    \|\chi_1\|_{L^\infty(B_R)}
    \|\chi_2\|_{L^\infty(B_R)}.
\]
By \eqref{eq:small-data-constants}, condition
\eqref{eq:small-data-radius} is equivalent to
\begin{equation}
\label{eq:small-data-radius-expanded}
    a+\mu\rho^3\le\rho,
    \qquad
    3\mu\rho^2<1.
\end{equation}
There exists a radius $\rho>0$ satisfying
\eqref{eq:small-data-radius-expanded} if and only if
\begin{equation}
\label{eq:coefficient-smallness}
    \widehat M_1^{\,3}\widehat M_2C_{\rm prod}^2\|\ell\|_{V'}^2
    \|\chi_1\|_{L^\infty(B_R)}
    \|\chi_2\|_{L^\infty(B_R)}
    <\frac{4}{27}.
\end{equation}
Indeed, when $a>0$, \eqref{eq:coefficient-smallness} implies that
\(\rho=3a/2\) satisfies \eqref{eq:small-data-radius-expanded}. Conversely,
the maximum of $r-\mu r^3$ over $r\ge0$ is
\(2/(3\sqrt{3\mu})\) when $\mu>0$, and equality occurs where
$3\mu r^2=1$; thus the strict contraction condition makes
\eqref{eq:coefficient-smallness} necessary. If $a=0$, choose a sufficiently
small positive radius, while if $\mu=0$, choose any positive radius satisfying
$\rho\ge a$. The constants
$\widehat M_1$ and $\widehat M_2$, and hence the threshold,
are frequency dependent as discussed in
Remarks~\ref{k-dep} and \ref{k-dep-2}.
\end{remark}
\begin{proof}
We prove both assertions simultaneously. Let $X=V$ for the continuous
problem and $X=V_h$ for the discrete problem, where $0<h\le h_0$.
In the first case, let $A_{j,X}=A_j$, $J_X=J$, and
$\ell_X=\ell$, where $J:L^2(B_R)\to V'$ is the canonical map
defined by
\[
    \langle Jf,\varphi\rangle
    :=\int_{B_R}f\overline\varphi\,dx.
\]
In the second case, let $A_{j,X}=A_{j,h}$, $J_X=R_h$, and
$\ell_X=\ell_h$. Propositions~\ref{prop:linear-continuous-invertibility}
and~\ref{prop:linear-galerkin-stability}, together with
\eqref{eq:discrete-data-operators}, give
\begin{equation}
\label{eq:common-linear-bounds}
    \|A_{j,X}^{-1}g\|_V\le \widehat M_j\|g\|_{X'},
    \qquad
    \|\ell_X\|_{X'}\le\|\ell\|_{V'}.
\end{equation}
Furthermore, Lemma~\ref{lem:product-estimate-continuous} implies
\begin{equation}
\label{eq:project-bound}
    \|J_X(\chi_jwz)\|_{X'}
    \le C_{\rm prod}\|\chi_j\|_{L^\infty(B_R)}
        \|w\|_V\|z\|_V,
\end{equation}
for $w,z\in X$, with the same estimate when either factor is conjugated.

Set
\[
    \mathcal B_{\rho,X}^1:=\{u\in X:\|u\|_V\le\rho\}
\]
and define
\begin{equation}
\label{eq:common-fixed-point-maps}
\begin{aligned}
    \mathcal G_X(u)
    &:=A_{2,X}^{-1}J_X(\chi_2u^2),\\
    \mathcal F_X(u)
    &:=A_{1,X}^{-1}
       \left(\ell_X+J_X(\chi_1\overline u\,\mathcal G_X(u))\right).
\end{aligned}
\end{equation}
For $u\in\mathcal B_{\rho,X}^1$,
\eqref{eq:common-linear-bounds}, \eqref{eq:project-bound}, and
\eqref{eq:small-data-constants} yield
\begin{equation}
\label{eq:common-map-bounds}
\begin{aligned}
    \|\mathcal G_X(u)\|_V
    &\le \widehat M_2C_{\rm prod}\|\chi_2\|_{L^\infty(B_R)}\rho^2
      =\sigma_\rho,\\
    \|\mathcal F_X(u)\|_V
    &\le \widehat M_1\|\ell\|_{V'}
      +\widehat M_1C_{\rm prod}\|\chi_1\|_{L^\infty(B_R)}
       \rho\sigma_\rho\\
    &=\widehat M_1\|\ell\|_{V'}+\theta_\rho\rho
      \le\rho.
\end{aligned}
\end{equation}
Hence \(\mathcal F_X\) maps \(\mathcal B_{\rho,X}^1\) into itself.

Let $u,\widetilde u\in\mathcal B_{\rho,X}^1$, and set
$v=\mathcal G_X(u)$ and
$\widetilde v=\mathcal G_X(\widetilde u)$. From
\(u^2-\widetilde u^2=(u-\widetilde u)(u+\widetilde u)\),
\begin{equation}
\label{eq:common-G-lipschitz}
    \|v-\widetilde v\|_V
    \le\beta_\rho\|u-\widetilde u\|_V.
\end{equation}
Using
\[
    \overline u\,v-\overline{\widetilde u}\,\widetilde v
    =\overline u\,(v-\widetilde v)
     +\overline{(u-\widetilde u)}\,\widetilde v,
\]
followed by \eqref{eq:common-map-bounds} and
\eqref{eq:common-G-lipschitz}, we obtain
\begin{align}
    \|\mathcal F_X(u)-\mathcal F_X(\widetilde u)\|_V
    &\le \widehat M_1C_{\rm prod}\|\chi_1\|_{L^\infty(B_R)}
       \left(\rho\|v-\widetilde v\|_V
       +\sigma_\rho\|u-\widetilde u\|_V\right)\notag\\
    &\le(\alpha_\rho\beta_\rho+\theta_\rho)
       \|u-\widetilde u\|_V
     =q_\rho\|u-\widetilde u\|_V.
    \label{eq:common-F-contraction}
\end{align}
By \eqref{eq:small-data-radius}, \(\mathcal F_X\) is a contraction on the
complete set \(\mathcal B_{\rho,X}^1\). Its unique fixed point $u_1$
in the continuous case, or $u_{1,h}$ in the discrete case, together with
$u_2=\mathcal G_V(u_1)$, or
$u_{2,h}=\mathcal G_{V_h}(u_{1,h})$, solves the corresponding coupled
problem. The first estimate in \eqref{eq:common-map-bounds} places the pair
in the ball in \eqref{eq:continuous-discrete-balls}. Conversely, every
solution in that ball gives a fixed point of \(\mathcal F_X\), which proves
uniqueness there.
\end{proof}

\begin{remark}[Continuous and discrete fixed-point rates]
\label{rem:fixed-point-rates}
Under the hypotheses of Theorem~\ref{thm:continuous-discrete-small-data},
let the continuous iterates be defined by
\[
    u_2^n=\mathcal G_V(u_1^n),
    \qquad
    u_1^{n+1}=\mathcal F_V(u_1^n),
\]
and let the discrete iterates be defined analogously using
$\mathcal G_{V_h}$ and $\mathcal F_{V_h}$. If
\(\|u_1^0\|_V\le\rho\), then \eqref{eq:common-map-bounds} and induction
give
\[
    \|u_1^n\|_V\le\rho,
    \qquad
    \|u_2^n\|_V\le\sigma_\rho
    \qquad\forall n\ge0,
\]
and \eqref{eq:common-F-contraction} and
\eqref{eq:common-G-lipschitz} give
\begin{equation}
\label{eq:continuous-fixed-point-rate}
    \|u_1-u_1^n\|_V+\|u_2-u_2^n\|_V
    \le(1+\beta_\rho)q_\rho^n\|u_1-u_1^0\|_V
\end{equation}
If \(\|u_{1,h}^0\|_V\le\rho\), the same argument gives
\[
    \|u_{1,h}^n\|_V\le\rho,
    \qquad
    \|u_{2,h}^n\|_V\le\sigma_\rho
    \qquad\forall n\ge0,
\]
and
\begin{equation}
\label{eq:discrete-fixed-point-rate}
    \|u_{1,h}-u_{1,h}^n\|_V+\|u_{2,h}-u_{2,h}^n\|_V
    \le(1+\beta_\rho)q_\rho^n
       \|u_{1,h}-u_{1,h}^0\|_V.
\end{equation}
In particular, if both initial bounds hold, then
\begin{equation}
\label{eq:uniform-iterate-bounds}
\begin{aligned}
    \|u_1^n\|_V,\ \|u_{1,h}^n\|_V&\le\rho,\\
    \|u_2^n\|_V,\ \|u_{2,h}^n\|_V&\le\sigma_\rho
    \qquad\forall n\ge0.
\end{aligned}
\end{equation}
\end{remark}

\subsection{Error estimates}
\label{subsec:fem-error-estimates}
We now derive error estimates in the same small-data regime in which the
continuous and discrete nonlinear problems are well posed. Throughout this
subsection, unless stated otherwise, we assume the hypotheses of
Theorem~\ref{thm:continuous-discrete-small-data} and fix a radius
\(\rho>0\) satisfying \eqref{eq:small-data-radius}. In particular,
\eqref{eq:continuous-discrete-balls} gives the uniform bounds
\begin{equation}
\label{eq:solution-radius-bounds}
    \|u_1\|_V,\ \|u_{1,h}\|_V\le\rho,
    \qquad
    \|u_2\|_V,\ \|u_{2,h}\|_V\le\sigma_\rho.
\end{equation}

The main result of the paper is the following a priori error estimate:
\begin{theorem}[A priori error estimate]
\label{thm:main-fem-error}
Let
$(u_1,u_2)\in\mathcal B_\rho$ and
$(u_{1,h},u_{2,h})\in\mathcal B_{\rho,h}$ be the solutions supplied by
Theorem~\ref{thm:continuous-discrete-small-data}. Then, for
$0<h\le h_0$,
\begin{equation}
\label{eq:a-priori-estimate}
    \begin{aligned}
    \|u_1-u_{1,h}\|_V+\|u_2-u_{2,h}\|_V
    \le
    C_{\rm FE}
    \left(
        \inf_{z_h\in V_h}\|u_1-z_h\|_V
        +
        \inf_{w_h\in V_h}\|u_2-w_h\|_V
    \right),
\end{aligned}
\end{equation}
where
\[
    C_{\rm FE}
    :=
    C_{\rm qo}
    \left[
        1
        +
        \frac{(1+\beta_\rho)(1+\alpha_\rho)}
        {1-q_\rho}
    \right].
\]
In particular, $C_{\rm FE}$ is independent of $h$.
\end{theorem}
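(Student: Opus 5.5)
The plan is a nonlinear Céa argument: compare the discrete nonlinear solution with the Galerkin projections of the \emph{exact} data, and close the estimate using the contraction factor \(q_\rho<1\) from Theorem~\ref{thm:continuous-discrete-small-data}.

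\textbf{Step 1 (auxiliary linear Galerkin solutions).} Let \((u_1,u_2)\) be the continuous solution. Define \(w_{1,h},w_{2,h}\in V_h\) as the solutions of the linear discrete problems
\[
A_{1,h}w_{1,h}=\ell_h+R_h(\chi_1\overline{u_1}u_2),\qquad A_{2,h}w_{2,h}=R_h(\chi_2u_1^2).
\]
These are the Galerkin approximations of the linear problems \(A_1u_1=\ell+J(\chi_1\overline{u_1}u_2)\) and \(A_2u_2=J(\chi_2u_1^2)\), whose exact solutions are \(u_1\) and \(u_2\). Part 2 of Proposition~\ref{prop:linear-galerkin-stability} then gives, for \(j=1,2\),
\[
\eta_j:=\|u_j-w_{j,h}\|_V\le C_{\rm qo}\inf_{z_h\in V_h}\|u_j-z_h\|_V .
\]

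\textbf{Step 2 (discrete error equations).} Set \(e_j:=w_{j,h}-u_{j,h}\in V_h\) and \(E_j:=\|u_j-u_{j,h}\|_V\). Subtracting \eqref{hequation} from the defining equations of \(w_{j,h}\) gives
\[
A_{1,h}e_1=R_h\bigl(\chi_1(\overline{u_1}u_2-\overline{u_{1,h}}u_{2,h})\bigr),\qquad
A_{2,h}e_2=R_h\bigl(\chi_2(u_1^2-u_{1,h}^2)\bigr).
\]
Next combine three ingredients: the discrete stability \eqref{discrete_stab} with \(M_{\rm lin}\le\widehat M_j\); the bound \eqref{eq:project-bound} (the \(V_h'\)-norm is dominated by the \(V'\)-norm); and the splittings \(\overline{u_1}u_2-\overline{u_{1,h}}u_{2,h}=\overline{u_1}(u_2-u_{2,h})+\overline{(u_1-u_{1,h})}u_{2,h}\) and \(u_1^2-u_{1,h}^2=(u_1-u_{1,h})(u_1+u_{1,h})\). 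Together with the radius bounds \eqref{eq:solution-radius-bounds}, these yield
\[
\|e_1\|_V\le \alpha_\rho E_2+\theta_\rho E_1,\qquad \|e_2\|_V\le \beta_\rho E_1 .
\]
This uses \(\widehat M_1C_{\rm prod}\|\chi_1\|_{L^\infty}\sigma_\rho=\theta_\rho\) and \(\|u_1+u_{1,h}\|_V\le2\rho\).

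\textbf{Step 3 (closing the loop).} The triangle inequality gives \(E_1\le\eta_1+\alpha_\rho E_2+\theta_\rho E_1\) and \(E_2\le\eta_2+\beta_\rho E_1\). Substituting the second inequality into the first gives \((1-\theta_\rho-\alpha_\rho\beta_\rho)E_1\le\eta_1+\alpha_\rho\eta_2\). Since \(q_\rho=\theta_\rho+\alpha_\rho\beta_\rho<1\), this yields \(E_1\le(\eta_1+\alpha_\rho\eta_2)/(1-q_\rho)\). Then
\[
E_1+E_2\le\eta_2+(1+\beta_\rho)\frac{\eta_1+\alpha_\rho\eta_2}{1-q_\rho}\le\Bigl[1+\frac{(1+\beta_\rho)(1+\alpha_\rho)}{1-q_\rho}\Bigr](\eta_1+\eta_2).
\]
Inserting the Step~1 bound on \(\eta_j\) gives \eqref{eq:a-priori-estimate} with the stated \(C_{\rm FE}\), which is independent of \(h\).

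\textbf{Main obstacle.} The delicate point is Step~2. The nonlinear differences must be bounded using only the a priori ball bounds for \emph{both} the continuous and the discrete solutions, which is exactly why uniqueness in \(\mathcal B_\rho\) and \(\mathcal B_{\rho,h}\) is needed. The splitting must also be chosen so that the \(E_1\)-coefficient reproduces \(q_\rho\), making the absorption possible under the same small-data condition \eqref{eq:small-data-radius}.
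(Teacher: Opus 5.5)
Your proposal is correct and follows the paper's proof essentially step for step: the frozen-data Galerkin solutions, the bounds $\|w_{2,h}-u_{2,h}\|_V\le\beta_\rho E_1$ and $\|w_{1,h}-u_{1,h}\|_V\le\alpha_\rho E_2+\theta_\rho E_1$, and absorption using $1-q_\rho>0$ give exactly the stated $C_{\rm FE}$. One small correction to your closing remark: the argument uses only that both solutions lie in $\mathcal B_\rho$ and $\mathcal B_{\rho,h}$, through the bounds \eqref{eq:solution-radius-bounds}, and does not use their uniqueness there.
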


\begin{proof}
Set
$e_1:=u_1-u_{1,h}$ and $e_2:=u_2-u_{2,h}$.
Define the best-approximation errors
$$
    \eta_{1,h}:=\inf_{z_h\in V_h}\|u_1-z_h\|_V,
    \qquad
    \eta_{2,h}:=\inf_{w_h\in V_h}\|u_2-w_h\|_V.
$$
Let $(\widetilde u_{1,h},\widetilde u_{2,h})\in V_h\times V_h$ be the
Galerkin solutions of the linear problems with the exact nonlinear data frozen:
\begin{align}
    a_1(\widetilde u_{1,h},\xi_h)
    &=
    \int_{B_R}
        \chi_1\overline{u_1}u_2\overline{\xi_h}\,dx
    +
    \ell(\xi_h)
    &\forall \xi_h\in V_h,
    \label{eq:frozen-linear-1}\\
    a_2(\widetilde u_{2,h},\eta_h)
    &=
    \int_{B_R}
        \chi_2u_1^2\overline{\eta_h}\,dx
    &\forall \eta_h\in V_h.
    \label{eq:frozen-linear-2}
\end{align}
Since $u_1$ and $u_2$ solve the corresponding continuous linear problems with
the same right-hand sides, Proposition~\ref{prop:linear-galerkin-stability} gives
\begin{equation}
     \|u_1-\widetilde u_{1,h}\|_V
    \le
    C_{\rm qo}\eta_{1,h},
    \qquad
    \|u_2-\widetilde u_{2,h}\|_V
    \le
    C_{\rm qo}\eta_{2,h}.
    \label{eq:frozen-qo}
\end{equation}
Subtracting \eqref{eq:discrete-shg-2} from \eqref{eq:frozen-linear-2}, we get
$$
    a_2(\widetilde u_{2,h}-u_{2,h},\eta_h)
    =
    \int_{B_R}
        \chi_2(u_1^2-u_{1,h}^2)\overline{\eta_h}\,dx
    \qquad\forall \eta_h\in V_h.
$$
By the discrete linear stability estimate \eqref{discrete_stab},
$$
\begin{aligned}
    \|\widetilde u_{2,h}-u_{2,h}\|_V
    &\le
    M_{\rm lin}\|R_h(\chi_2(u_1^2-u_{1,h}^2))\|_{V_h'}\\
    &\le
    \widehat M_2\|\chi_2(u_1^2-u_{1,h}^2)\|_{V'}.
\end{aligned}
$$
Since
$$
    u_1^2-u_{1,h}^2
    =
    (u_1-u_{1,h})(u_1+u_{1,h})
    =
    e_1(u_1+u_{1,h}),
$$
Lemma~\ref{lem:product-estimate-continuous} gives
$$
\begin{aligned}
    \|\chi_2(u_1^2-u_{1,h}^2)\|_{V'}
    &\le
    \|\chi_2\|_{L^\infty(B_R)}
    C_{\rm prod}
    \|e_1\|_V
    \|u_1+u_{1,h}\|_V\\
    &\le
    2C_{\rm prod}\|\chi_2\|_{L^\infty(B_R)}
    \rho
    \|e_1\|_V.
\end{aligned}
$$
Therefore
$$
    \|\widetilde u_{2,h}-u_{2,h}\|_V
    \le
    \beta_\rho\|e_1\|_V.
$$
Using the triangle inequality and \eqref{eq:frozen-qo},
\begin{equation}
        \|e_2\|_V
    \le
    C_{\rm qo}\eta_{2,h}
    +
    \beta_\rho\|e_1\|_V.
    \label{eq:e2-bound-final}
\end{equation}
Next, subtracting \eqref{eq:discrete-shg-1} from \eqref{eq:frozen-linear-1} and using the fact that $\ell_h(\xi_h)=\ell(\xi_h)$ gives
$$
    a_1(\widetilde u_{1,h}-u_{1,h},\xi_h)
    =
    \int_{B_R}
        \chi_1
        \left(
            \overline{u_1}u_2
            -
            \overline{u_{1,h}}u_{2,h}
        \right)
        \overline{\xi_h}\,dx
    \qquad\forall \xi_h\in V_h.
$$
By the discrete linear stability estimate \eqref{discrete_stab},
$$
\begin{aligned}
    \|\widetilde u_{1,h}-u_{1,h}\|_V
    &\le
    M_{\rm lin}
    \left\|R_h\left(
        \chi_1
        \left(
            \overline{u_1}u_2
            -
            \overline{u_{1,h}}u_{2,h}
        \right)
    \right)\right\|_{V_h'}\\
    &\le
    \widehat M_1
    \left\|
        \chi_1
        \left(
            \overline{u_1}u_2
            -
            \overline{u_{1,h}}u_{2,h}
        \right)
    \right\|_{V'}.
\end{aligned}
$$
We next decompose
$$
    \overline{u_1}u_2
    -
    \overline{u_{1,h}}u_{2,h}
    =
    \overline{u_1}(u_2-u_{2,h})
    +
    \overline{(u_1-u_{1,h})}u_{2,h}
    =
    \overline{u_1}e_2+\overline{e_1}u_{2,h}.
$$
Applying Lemma~\ref{lem:product-estimate-continuous} and
\eqref{eq:solution-radius-bounds},
$$
\begin{aligned}
    \left\|
        \chi_1
        \left(
            \overline{u_1}u_2
            -
            \overline{u_{1,h}}u_{2,h}
        \right)
    \right\|_{V'}\le
    C_{\rm prod}\|\chi_1\|_{L^\infty(B_R)}
    \left(
        \rho\|e_2\|_V+\sigma_\rho\|e_1\|_V
    \right).
\end{aligned}
$$
Thus
$$
    \|\widetilde u_{1,h}-u_{1,h}\|_V
    \le
    \alpha_\rho\|e_2\|_V+\theta_\rho\|e_1\|_V.
$$
Again using the triangle inequality and \eqref{eq:frozen-qo},
\begin{equation}
        \|e_1\|_V
    \le
    C_{\rm qo}\eta_{1,h}
    +
    \alpha_\rho\|e_2\|_V+\theta_\rho\|e_1\|_V.
    \label{eq:e1-pre-final}
\end{equation}
Substituting \eqref{eq:e2-bound-final} into \eqref{eq:e1-pre-final}, we find
$$
    \|e_1\|_V
    \le
    C_{\rm qo}\eta_{1,h}
    +
    \theta_\rho\|e_1\|_V
    +
    \alpha_\rho
    \left(
        C_{\rm qo}\eta_{2,h}
        +
        \beta_\rho\|e_1\|_V
    \right).
$$
Hence
$$
    (1-q_\rho)\|e_1\|_V
    \le
    C_{\rm qo}\eta_{1,h}
    +
    \alpha_\rho C_{\rm qo}\eta_{2,h}.
$$
Since $q_\rho<1$ by \eqref{eq:small-data-radius},
\begin{equation}
        \|e_1\|_V
    \le
    \frac{C_{\rm qo}}{1-q_\rho}
    \left(
        \eta_{1,h}+\alpha_\rho\eta_{2,h}\right)\le
    \frac{C_{\rm qo}(1+\alpha_\rho)}{1-q_\rho}
    (\eta_{1,h}+\eta_{2,h}).
    \label{eq:e1-final-main}
\end{equation}
Using \eqref{eq:e2-bound-final} and \eqref{eq:e1-final-main},
\begin{equation}
\label{eq:e2-final-main}
    \begin{aligned}
    \|e_2\|_V
    \le
    C_{\rm qo}
    \left(
        1+\frac{\beta_\rho(1+\alpha_\rho)}{1-q_\rho}
    \right)
    (\eta_{1,h}+\eta_{2,h}).
\end{aligned}
\end{equation}
Adding the estimates \eqref{eq:e1-final-main}-\eqref{eq:e2-final-main} proves \eqref{eq:a-priori-estimate}.
\end{proof}
The previous result then directly implies an explicit error rate with respect to the mesh size.
\begin{corollary}[Convergence rate]
\label{cor:fem-convergence-rate}
Assume the hypotheses of Theorem~\ref{thm:main-fem-error}. If
Assumption~\ref{ass:fe-approx} holds and
$u_1\in H^{1+s_1}(B_R)$, $u_2\in H^{1+s_2}(B_R)$, for $s_1,s_2\in(0,p]$,
then for $0<h<h_0$,
$$
\begin{aligned}
    \|u_1-u_{1,h}\|_{V}
    +
    \|u_2-u_{2,h}\|_{V}
    \le
    C
    \left(
        h^{s_1}|u_1|_{H^{1+s_1}(B_R)}
        +
        h^{s_2}|u_2|_{H^{1+s_2}(B_R)}
    \right),
\end{aligned}
$$
where $C$ is independent of $h$. In particular, if
$u_1,u_2\in H^{1+s}(B_R)$ for a common $s\in(0,p]$, then
$$
    \|u_1-u_{1,h}\|_{V}
    +
    \|u_2-u_{2,h}\|_{V}
    \le
    Ch^s
    \left(
        |u_1|_{H^{1+s}(B_R)}
        +
        |u_2|_{H^{1+s}(B_R)}
    \right).
$$
\end{corollary}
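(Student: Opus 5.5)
The corollary follows by inserting the approximation property of Assumption~\ref{ass:fe-approx} into the quasi-optimal bound \eqref{eq:a-priori-estimate} of Theorem~\ref{thm:main-fem-error}. No new nonlinear analysis is needed, since Theorem~\ref{thm:main-fem-error} already controls the full nonlinear error by the two best-approximation errors $\eta_{1,h}$ and $\eta_{2,h}$, with a constant $C_{\rm FE}$ independent of $h$.

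\emph{Step 1.} Fix $0<h<h_0$. The hypotheses of Theorem~\ref{thm:main-fem-error} hold, so \eqref{eq:a-priori-estimate} gives
\[
\|u_1-u_{1,h}\|_V+\|u_2-u_{2,h}\|_V\le C_{\rm FE}\bigl(\eta_{1,h}+\eta_{2,h}\bigr).
\]

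\emph{Step 2.} Since $u_j\in H^{1+s_j}(B_R)$ with $s_j\in(0,p]$, Assumption~\ref{ass:fe-approx} applies separately to each field and yields
\[
\eta_{j,h}\le C_{\rm app}h^{s_j}|u_j|_{H^{1+s_j}(B_R)},\qquad j=1,2.
\]
Substituting these bounds into Step~1 gives the first estimate with $C:=C_{\rm FE}C_{\rm app}$. This constant depends only on $C_{\rm qo}$, $\alpha_\rho$, $\beta_\rho$, $q_\rho$ and $C_{\rm app}$, none of which depends on $h$.

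\emph{Step 3.} For the common-exponent case, set $s_1=s_2=s$ and factor out $h^s$.

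The only point that needs checking is that the approximation property is applied to the exact solutions $u_1,u_2$, which are fixed and independent of $h$, so their Sobolev seminorms are $h$-independent constants. There is no genuine obstacle here. Note that the regularity $u_j\in H^{1+s_j}(B_R)$ is an assumption of the corollary; it is not derived from elliptic regularity.
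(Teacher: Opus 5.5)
Your proposal is correct and matches the paper's argument. The paper gives no separate proof and simply notes that Theorem~\ref{thm:main-fem-error} directly implies the rate by bounding $\eta_{1,h}$ and $\eta_{2,h}$ in \eqref{eq:a-priori-estimate} via Assumption~\ref{ass:fe-approx}; this is exactly your derivation with $C=C_{\rm FE}C_{\rm app}$.
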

The previous theorem compares the exact nonlinear solution with the exact
discrete nonlinear fixed-point. In practice, one may compute the discrete
solution by iterating the fixed-point map. We therefore record the analogue of
the finite element error estimate for the fixed-point iterates and analyze how the error propagates with each iteration. Define the continuous iterates by choosing $u_1^0\in V$ and, for $n\ge0$,
solving
\begin{align}
    a_2(u_2^n,\eta)
    &=
    \int_{B_R}
        \chi_2(u_1^n)^2\overline{\eta}\,dx
    &&\forall \eta\in V,
    \label{eq:continuous-iterate-2}\\
    a_1(u_1^{n+1},\xi)
    &=
    \int_{B_R}
        \chi_1\overline{u_1^n}u_2^n\overline{\xi}\,dx
    +
    \ell(\xi)
    &&\forall \xi\in V,
    \label{eq:continuous-iterate-1}
\end{align}
for $(u_1^{n+1},u_2^n)\in V\times V$.
Similarly, given $u_{1,h}^0\in V_h$, define the discrete iterates $(u_{1,h}^{n+1},u_{2,h}^n)$, $n\ge 0$, by
\begin{align}
    a_2(u_{2,h}^n,\eta_h)
    &=
    \int_{B_R}
        \chi_2(u_{1,h}^n)^2\overline{\eta_h}\,dx
    &&\forall \eta_h\in V_h,
    \label{eq:discrete-iterate-2}\\
    a_1(u_{1,h}^{n+1},\xi_h)
    &=
    \int_{B_R}
        \chi_1\overline{u_{1,h}^n}u_{2,h}^n\overline{\xi_h}\,dx
    +
    \ell(\xi_h)
    &&\forall \xi_h\in V_h.
    \label{eq:discrete-iterate-1}
\end{align}
\begin{theorem}[Fixed-point estimate]
\label{thm:iterate-error}
Assume the hypotheses of
Theorem~\ref{thm:continuous-discrete-small-data}, let $0<h\le h_0$, and
let the continuous and discrete fixed-point iterates be defined by
\eqref{eq:continuous-iterate-2}-\eqref{eq:continuous-iterate-1} and
\eqref{eq:discrete-iterate-2}-\eqref{eq:discrete-iterate-1}, respectively.
Suppose that
\begin{equation}
\label{eq:iterate-initial-bounds}
    \|u_1^0\|_V\le\rho,
    \qquad
    \|u_{1,h}^0\|_V\le\rho.
\end{equation}
Then the bounds in \eqref{eq:uniform-iterate-bounds} hold for every
$n\ge0$. Set
\begin{equation*}
     \eta_{1,h}^{n+1}
    :=
    \inf_{z_h\in V_h}\|u_1^{n+1}-z_h\|_V,
    \qquad
    \eta_{2,h}^{n}
    :=
    \inf_{w_h\in V_h}\|u_2^{n}-w_h\|_V.
\end{equation*}
Then
\begin{equation}
        \|u_2^n-u_{2,h}^n\|_V
    \le
    C_{\rm qo}\eta_{2,h}^{n}
    +
    \beta_\rho\|u_1^n-u_{1,h}^n\|_V,
    \label{eq:iterate-u2-estimate}
\end{equation}
and
\begin{equation}
     \|u_1^{n+1}-u_{1,h}^{n+1}\|_V
    \le
    C_{\rm qo}\eta_{1,h}^{n+1}
    +
    \theta_\rho\|u_1^n-u_{1,h}^n\|_V
    +
    \alpha_\rho\|u_2^n-u_{2,h}^n\|_V.
    \label{eq:iterate-u1-estimate}
\end{equation}
For every $n\ge0$,
\begin{equation}
\label{eq:total_geom_series_estimate}
\begin{aligned}
    \|u_1^n-u_{1,h}^n\|_V&+\|u_2^n-u_{2,h}^n\|_V\\
    &\le(1+\beta_\rho)q_\rho^n\|u_1^0-u_{1,h}^0\|_V+C_{\rm qo}(1+\beta_\rho)
        \sum_{k=1}^{n}q_\rho^{n-k}\eta_{1,h}^{k}\\
        &\qquad\qquad\qquad\qquad\qquad\qquad\qquad+C_{\rm qo}(1+\beta_\rho)\alpha_\rho
        \sum_{m=0}^{n-1}q_\rho^{n-1-m}\eta_{2,h}^{m}
        +C_{\rm qo}\eta_{2,h}^{n},
\end{aligned}
\end{equation}
where a sum with an empty index set is understood to be zero.
\end{theorem}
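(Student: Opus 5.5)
The plan mirrors the proof of Theorem~\ref{thm:main-fem-error}, applied iterate by iterate, and then unrolls the resulting one-step recursion. The uniform bounds \eqref{eq:uniform-iterate-bounds} follow directly from Remark~\ref{rem:fixed-point-rates}: by \eqref{eq:common-map-bounds} and induction on $n$, starting from \eqref{eq:iterate-initial-bounds}, we get $\|u_1^n\|_V,\|u_{1,h}^n\|_V\le\rho$ and $\|u_2^n\|_V,\|u_{2,h}^n\|_V\le\sigma_\rho$ for all $n$. These bounds replace \eqref{eq:solution-radius-bounds} in everything that follows.

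For \eqref{eq:iterate-u2-estimate}, fix $n$ and introduce the frozen Galerkin solution $\widetilde u_{2,h}^n\in V_h$ of $a_2(\widetilde u_{2,h}^n,\eta_h)=\int\chi_2(u_1^n)^2\overline{\eta_h}$. Since $u_2^n$ solves the continuous problem with the same data, part~2 of Proposition~\ref{prop:linear-galerkin-stability} gives $\|u_2^n-\widetilde u_{2,h}^n\|_V\le C_{\rm qo}\eta^n_{2,h}$. Subtracting \eqref{eq:discrete-iterate-2} and using \eqref{discrete_stab} with Lemma~\ref{lem:product-estimate-continuous} on the factorization $(u_1^n)^2-(u_{1,h}^n)^2=(u_1^n-u_{1,h}^n)(u_1^n+u_{1,h}^n)$, together with $\|u_1^n+u_{1,h}^n\|_V\le 2\rho$, yields $\|\widetilde u_{2,h}^n-u_{2,h}^n\|_V\le\beta_\rho\|u_1^n-u_{1,h}^n\|_V$. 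The triangle inequality then gives \eqref{eq:iterate-u2-estimate}.

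For \eqref{eq:iterate-u1-estimate}, proceed in the same way. Take $\widetilde u_{1,h}^{n+1}$ to be the Galerkin solution with data $\ell+\chi_1\overline{u_1^n}u_2^n$, so quasi-optimality gives $C_{\rm qo}\eta_{1,h}^{n+1}$. Then split
\[
\overline{u_1^n}u_2^n-\overline{u_{1,h}^n}u_{2,h}^n=\overline{u_1^n}(u_2^n-u_{2,h}^n)+\overline{(u_1^n-u_{1,h}^n)}\,u_{2,h}^n,
\]
and apply \eqref{discrete_stab} and the product estimate with the bounds $\rho$ and $\sigma_\rho$. This produces the terms $\alpha_\rho\|u_2^n-u_{2,h}^n\|_V+\theta_\rho\|u_1^n-u_{1,h}^n\|_V$.

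The only genuinely new step is the unrolling, and it is mostly bookkeeping. Write $E_n:=\|u_1^n-u_{1,h}^n\|_V$. Substituting \eqref{eq:iterate-u2-estimate} into \eqref{eq:iterate-u1-estimate} gives
\[
E_{n+1}\le q_\rho E_n+C_{\rm qo}\eta_{1,h}^{n+1}+\alpha_\rho C_{\rm qo}\eta_{2,h}^n,
\]
since $\theta_\rho+\alpha_\rho\beta_\rho=q_\rho$. By induction on $n$,
\[
E_n\le q_\rho^nE_0+C_{\rm qo}\sum_{k=1}^n q_\rho^{n-k}\eta_{1,h}^k+\alpha_\rho C_{\rm qo}\sum_{m=0}^{n-1}q_\rho^{n-1-m}\eta_{2,h}^m,
\]
where the index shift in the second sum comes from $\eta_{2,h}^m$ entering at step $m+1$. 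Finally, apply \eqref{eq:iterate-u2-estimate} once more at level $n$: $E_n+\|u_2^n-u_{2,h}^n\|_V\le(1+\beta_\rho)E_n+C_{\rm qo}\eta_{2,h}^n$. Inserting the unrolled bound for $E_n$ gives exactly \eqref{eq:total_geom_series_estimate}. The case $n=0$ holds trivially, with both sums empty.
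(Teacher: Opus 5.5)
Your proposal is correct and follows the paper's proof essentially step for step. The paper likewise builds frozen-data Galerkin solutions at each iterate and combines quasi-optimality with discrete stability and the product estimate to obtain \eqref{eq:iterate-u2-estimate} and \eqref{eq:iterate-u1-estimate}; it then unrolls the recursion with $q_\rho=\theta_\rho+\alpha_\rho\beta_\rho$ and applies \eqref{eq:iterate-u2-estimate} once more at level $n$.
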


\begin{proof}
This proof repeats the frozen-data argument of
Theorem~\ref{thm:main-fem-error} at a fixed iterate. First let $(\widetilde{u}_{1,h}^{\,n+1}, \widetilde{u}_{2,h}^{\,n})\in V_h\times V_h$ solve the discrete linear problem with
the exact iterate data:
\begin{align}
    a_1(\widetilde u_{1,h}^{\,n+1},\xi_h)
    &=
    \int_{B_R}
        \chi_1\overline{u_1^n}u_2^n\overline{\xi_h}\,dx
    +
    \ell(\xi_h)
    &\forall \xi_h\in V_h,
    \label{eq:frozen-iterate-1}\\
    a_2(\widetilde u_{2,h}^{\, n},\eta_h)
    &=
    \int_{B_R}
        \chi_2(u_1^n)^2\overline{\eta_h}\,dx
    &\forall \eta_h\in V_h.
    \label{eq:frozen-iterate-2}
\end{align}
Proposition~\ref{prop:linear-galerkin-stability} gives
\[
    \|u_1^{n+1}-\widetilde u_{1,h}^{\,n+1}\|_V
    \le C_{\rm qo}\eta_{1,h}^{n+1},
    \qquad
    \|u_2^n-\widetilde u_{2,h}^{\,n}\|_V
    \le C_{\rm qo}\eta_{2,h}^{n}.
\]
Subtracting \eqref{eq:discrete-iterate-1} from \eqref{eq:frozen-iterate-1} gives
\begin{align*} 
a_1(\widetilde u_{1,h}^{\,n+1}-u_{1,h}^{\,n+1},\xi_h)
    &=
    \int_{B_R}
        \chi_1
        \left(
            \overline{u_1^n}u_2^n
            -
            \overline{u_{1,h}^n}u_{2,h}^n
        \right)
        \overline{\xi_h}\,dx\\
        &= \int_{B_R}
        \chi_1
        \left(
    \overline{u_1^n}(u_2^n-u_{2,h}^n)
    +
    \overline{(u_1^n-u_{1,h}^n)}u_{2,h}^n
        \right)
        \overline{\xi_h}\,dx,
\end{align*}
and subtracting \eqref{eq:discrete-iterate-2} from \eqref{eq:frozen-iterate-2} gives
\begin{align*}
    a_2(\widetilde u_{2,h}^{\, n}-u_{2,h}^n,\eta_h)
    &=
    \int_{B_R}
        \chi_2\left((u_1^n)^2-(u_{1,h}^n)^2\right)
        \overline{\eta_h}\,dx.
\end{align*}
Using \eqref{discrete_stab} and Lemma~\ref{lem:product-estimate-continuous} we obtain
$$
\begin{aligned}
    \|\widetilde u_{1,h}^{\,n+1}-u_{1,h}^{n+1}\|_V
    &\le
    \theta_\rho\|u_1^n-u_{1,h}^n\|_V
    +\alpha_\rho\|u_2^n-u_{2,h}^n\|_V,\\
    \|\widetilde u_{2,h}^{\, n}-u_{2,h}^n\|_V
    &\le
    \beta_\rho\|u_1^n-u_{1,h}^n\|_V.
\end{aligned}
$$
The triangle inequality proves \eqref{eq:iterate-u2-estimate} and \eqref{eq:iterate-u1-estimate}. Substituting
\eqref{eq:iterate-u2-estimate} into \eqref{eq:iterate-u1-estimate} gives
\begin{equation}
        \|u_1^{n+1}-u_{1,h}^{n+1}\|_V
    \le
    C_{\rm qo}\eta_{1,h}^{n+1}
    +
    \alpha_\rho C_{\rm qo}\eta_{2,h}^{n}
    +
    q_\rho\|u_1^n-u_{1,h}^n\|_V.
    \label{eq:iterate-recursion}
\end{equation} Iterating the scalar recurrence
\eqref{eq:iterate-recursion} yields the geometric-series estimate:
\begin{equation}
\label{eq:geometric-series}
        \|u_1^n-u_{1,h}^n\|_V
    \le
    q_\rho^n\|u_1^0-u_{1,h}^0\|_V
    +
    C_{\rm qo}\sum_{m=0}^{n-1}
    q_\rho^{\,n-1-m}
    \left(
        \eta_{1,h}^{m+1}
        +
        \alpha_\rho \eta_{2,h}^{m}
    \right).
\end{equation}
Finally, combining \eqref{eq:geometric-series} with \eqref{eq:iterate-u2-estimate} results in \eqref{eq:total_geom_series_estimate}:
\[\begin{aligned}
    &\|u_1^n-u_{1,h}^n\|_V+\|u_2^n-u_{2,h}^n\|_V\\
    &\le (1+\beta_\rho)\|u_1^n-u_{1,h}^n\|_V+C_{\rm qo}\eta_{2,h}^n \\
    &\le (1+\beta_\rho)q_\rho^n\|u_1^0-u_{1,h}^0\|_V
    +C_{\rm qo}(1+\beta_\rho)\sum_{m=0}^{n-1}
    q_\rho^{\,n-1-m}
    \left(
        \eta_{1,h}^{m+1}
        +
        \alpha_\rho \eta_{2,h}^{m}
    \right)+C_{\rm qo}\eta_{2,h}^n\\
    &=(1+\beta_\rho)q_\rho^n\|u_1^0-u_{1,h}^0\|_V
    +C_{\rm qo}(1+\beta_\rho)\sum_{k=1}^{n}q_\rho^{n-k}\eta_{1,h}^{k}+C_{\rm qo}(1+\beta_\rho)\alpha_\rho
    \sum_{m=0}^{n-1}q_\rho^{n-1-m}\eta_{2,h}^{m}
    +C_{\rm qo}\eta_{2,h}^{n}.
\end{aligned}\]
\end{proof}
Our next result provides an explicit rate of the error between the continuous and discrete fixed-point iterates.
\begin{corollary}[Fixed-point error rate]
\label{cor:iterate-rate}
Assume the hypotheses of Theorem~\ref{thm:iterate-error} and 
Assumption~\ref{ass:fe-approx}. Let $s\in(0,p]$. Suppose there exists
$C_s>0$ such that, for every $n\ge0$,
\begin{equation}
        u_1^n,u_2^n\in H^{1+s}(B_R),
    \qquad
    |u_1^n|_{H^{1+s}(B_R)}+|u_2^n|_{H^{1+s}(B_R)}
    \le C_s.
\end{equation}
Then, for every $n\ge0$,
\begin{equation}
\label{eq:total-h-bound}
    \|u_1^n-u_{1,h}^n\|_V+
    \|u_2^n-u_{2,h}^n\|_V
    \le (1+\beta_\rho)q_\rho^n\|u_1^0-u_{1,h}^0\|_V
    +
    C_{\rm qo}C_{\rm app}C_s
    \left[
        1+\frac{(1+\beta_\rho)(1+\alpha_\rho)}{1-q_\rho}
    \right]h^s.
\end{equation}
\end{corollary}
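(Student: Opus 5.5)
The bound follows by inserting Assumption~\ref{ass:fe-approx} into the geometric-series estimate \eqref{eq:total_geom_series_estimate} of Theorem~\ref{thm:iterate-error} and summing the geometric series. First, by Assumption~\ref{ass:fe-approx} and the uniform regularity hypothesis, for every $k\ge1$ and $m\ge0$,
\[
    \eta_{1,h}^{k}\le C_{\rm app}h^s|u_1^k|_{H^{1+s}(B_R)}\le C_{\rm app}C_sh^s,
    \qquad
    \eta_{2,h}^{m}\le C_{\rm app}C_sh^s .
\]
Since the regularity bound controls the sum $|u_1^n|_{H^{1+s}}+|u_2^n|_{H^{1+s}}$, each seminorm separately is at most $C_s$. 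This gives a uniform bound by the single constant $C_{\rm app}C_sh^s$, and the uniformity in the iteration index is exactly what allows the sums to be controlled independently of $n$.

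Next, substitute these bounds into \eqref{eq:total_geom_series_estimate}. The first term, $(1+\beta_\rho)q_\rho^n\|u_1^0-u_{1,h}^0\|_V$, is kept unchanged. For the two sums, use $0\le q_\rho<1$, which follows from \eqref{eq:small-data-radius}, together with
\[
    \sum_{k=1}^{n}q_\rho^{n-k}\le\frac{1}{1-q_\rho},
    \qquad
    \sum_{m=0}^{n-1}q_\rho^{n-1-m}\le\frac{1}{1-q_\rho}.
\]
The empty-sum case $n=0$ is trivially covered. The remaining terms are then bounded by
\[
    C_{\rm qo}C_{\rm app}C_sh^s\left[\frac{(1+\beta_\rho)+(1+\beta_\rho)\alpha_\rho}{1-q_\rho}+1\right]
    =C_{\rm qo}C_{\rm app}C_sh^s\left[1+\frac{(1+\beta_\rho)(1+\alpha_\rho)}{1-q_\rho}\right],
\]
which is exactly the second term of \eqref{eq:total-h-bound}.

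No step is a real obstacle; the argument is bookkeeping. The two points to check are that both sums are dominated by the same factor $1/(1-q_\rho)$, so their coefficients combine into $(1+\beta_\rho)(1+\alpha_\rho)$, and that the last term $C_{\rm qo}\eta_{2,h}^n$ supplies the additive $1$ in the bracket.
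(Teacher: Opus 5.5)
Your proposal is correct and is essentially the paper's argument. The paper substitutes the bounds $\eta_{1,h}^{m+1},\eta_{2,h}^{m}\le C_{\rm app}C_sh^s$ into \eqref{eq:geometric-series} and then into \eqref{eq:iterate-u2-estimate} before adding the two results. You substitute into their already-combined form \eqref{eq:total_geom_series_estimate} and sum the geometric series, which yields the same constant $C_{\rm qo}C_{\rm app}C_s\bigl[1+(1+\beta_\rho)(1+\alpha_\rho)/(1-q_\rho)\bigr]$.
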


\begin{proof}
By Assumption~\ref{ass:fe-approx} and the uniform bound by $C_s$,
$$
    \eta_{1,h}^{m+1}
    \le
    C_{\rm app}h^s|u_1^{m+1}|_{H^{1+s}(B_R)}\le C_{\rm app}C_sh^s,
    \qquad
    \eta_{2,h}^{m}
    \le
    C_{\rm app}h^s|u_2^{m}|_{H^{1+s}(B_R)}\le C_{\rm app}C_sh^s.
$$
Therefore
$$
    C_{\rm qo}\eta_{1,h}^{m+1}
    +
    \alpha_\rho C_{\rm qo}\eta_{2,h}^{m}
    \le
    C_{\rm qo}C_{\rm app}(1+\alpha_\rho)C_s h^s.
$$
Substituting this into \eqref{eq:geometric-series} gives
\begin{equation}
\label{eq:h-bound-1}
    \begin{aligned}
    \|u_1^n-u_{1,h}^n\|_V
    &\le
    q_\rho^n\|u_1^0-u_{1,h}^0\|_V
    +
    \frac{C_{\rm qo}C_{\rm app}C_s(1+\alpha_\rho)}{1-q_\rho} h^s.
\end{aligned}
\end{equation}
Moreover
\begin{equation}
    \label{eq:h-bound-2}
    \|u_2^n-u_{2,h}^n\|_V
    \le
    C_{\rm qo}C_{\rm app}C_s h^s+\beta_\rho\left(q_\rho^n\|u_1^0-u_{1,h}^0\|_V
    +
    \frac{C_{\rm qo}C_{\rm app}C_s(1+\alpha_\rho)}{1-q_\rho} h^s\right).
\end{equation}
Combining \eqref{eq:h-bound-1}-\eqref{eq:h-bound-2} and simplifying, we obtain \eqref{eq:total-h-bound}.
\end{proof}
\begin{remark}
\label{rem:iterate-rate-s-one}
For \(s=1\), the uniform regularity assumption in
Corollary~\ref{cor:iterate-rate} need not be imposed separately. More
precisely, under the hypotheses of Theorem~\ref{thm:iterate-error}, there
exists a constant \(C_{1}>0\), independent of the iteration \(m\), such
that
\begin{equation}
\label{eq:uniform-iterate-H2-bound}
    \sup_{m\ge0}
    \left(
        \|u_1^{m+1}\|_{H^2(B_R)}
        +
        \|u_2^m\|_{H^2(B_R)}
    \right)
    \le C_{1}.
\end{equation}
Consequently, if \(p\ge1\), estimate \eqref{eq:total-h-bound} holds with $s=1$ and
$C_s=C_1$. Notice that no \(H^2(B_R)\)-regularity
of the initial iterate \(u_1^0\) is required.

To prove \eqref{eq:uniform-iterate-H2-bound}, fix \(R'>R\), and, for
\(j\in\{1,2\}\), let \(w_{j,g}\) denote the outgoing solution of
\eqref{wg1}-\eqref{wg-bc} with \(k=\kappa_j\) and Dirichlet datum \(g\).
Define extensions to \(B_{R'}\) by
\[
    U_2^m(x)
    :=
    \begin{cases}
        u_2^m(x),
            &x\in B_R,\\
        w_{2,\gamma u_2^m}(x),
            &x\in B_{R'}\setminus\overline{B_R},
    \end{cases}
\]
and
\[
    U_1^{m+1}(x)
    :=
    \begin{cases}
        u_1^{m+1}(x),
            &x\in B_R,\\
        u^i(x)+
        w_{1,\gamma(u_1^{m+1}-u^i)}(x),
            &x\in B_{R'}\setminus\overline{B_R}.
    \end{cases}
\]
Let \(\widetilde n_j\) denote the extension of \(n_j\) to \(B_{R'}\)
obtained by setting \(\widetilde n_j=1\) on
\(B_{R'}\setminus B_R\).
The trace continuity in these definitions and the exact DtN conditions
\eqref{eq:u1-dtn-bc}-\eqref{eq:u2-dtn-bc} imply that the traces of the
normal derivatives also agree on \(\Gamma_R\). Hence
\(U_1^{m+1},U_2^m\in H^1(B_{R'})\), and the boundary distributions
arising at \(\Gamma_R\) cancel. In particular, these extensions satisfy
\begin{align}
    \Delta U_1^{m+1}
    +\kappa_1^2\widetilde n_1 U_1^{m+1}
    &=\begin{cases}
        -\chi_1 \overline{u_1^m}u_2^m,&x\in B_R, \\
        0,& x\in B_{R'}\setminus B_R,
    \end{cases}
    \label{eq:extended-iterate-u1}\\
    \Delta U_2^m
    +\kappa_2^2\widetilde n_2 U_2^m
    &=\begin{cases}
        -\chi_2(u_1^m)^2,&x\in B_R, \\
        0,& x\in B_{R'}\setminus B_R,
    \end{cases}
    \label{eq:extended-iterate-u2}
\end{align}
in the distributional sense in \(B_{R'}\). The boundedness of the exterior Dirichlet solution operator and the
trace theorem give
\begin{equation}
\label{eq:extended-iterate-H1-bound}
    \|U_1^{m+1}\|_{H^1(B_{R'})}
    +
    \|U_2^m\|_{H^1(B_{R'})}
    \le
    C\left(
        \rho+\sigma_\rho+
        \|u^i\|_{H^1(B_{R'})}
    \right),
\end{equation}
where we used \eqref{eq:uniform-iterate-bounds}. Moreover, since
\(H^1(B_R)\hookrightarrow L^4(B_R)\) for \(d\in\{2,3\}\),
\begin{align}
    \|\chi_2(u_1^m)^2\|_{L^2(B_R)}
    &\le
    C\|\chi_2\|_{L^\infty(B_R)}\rho^2,
    \label{eq:iterate-source-L2-u2}\\
    \|\chi_1\overline{u_1^m}u_2^m\|_{L^2(B_R)}
    &\le
    C\|\chi_1\|_{L^\infty(B_R)}
    \rho\sigma_\rho.
    \label{eq:iterate-source-L2-u1}
\end{align}
It follows from \eqref{eq:extended-iterate-u1}-\eqref{eq:iterate-source-L2-u1}
that \(\Delta U_1^{m+1},\Delta U_2^m\in L^2(B_{R'})\), uniformly in
\(m\). Since \(B_R\Subset B_{R'}\), interior elliptic regularity for the
Laplacian therefore yields
\[
\begin{aligned}
    \|u_1^{m+1}\|_{H^2(B_R)}
    +
    \|u_2^m\|_{H^2(B_R)}
    \le C\Big(
        &\rho+\sigma_\rho+\|u^i\|_{H^1(B_{R'})}+
        \|\chi_1\|_{L^\infty(B_R)}\rho\sigma_\rho
        +
        \|\chi_2\|_{L^\infty(B_R)}\rho^2
    \Big),
\end{aligned}
\]
which proves \eqref{eq:uniform-iterate-H2-bound}. Finally, the proof of Corollary~\ref{cor:iterate-rate} uses only the
approximation errors \(\eta_{1,h}^{m+1}\) and \(\eta_{2,h}^m\).
Therefore \eqref{eq:uniform-iterate-H2-bound}, rather than regularity of
\(u_1^0\), is the uniform regularity needed when \(s=1\).
\end{remark}
We conclude with two representations of the total error.
\begin{corollary}[Total error decompositions]
\label{cor:total-error-decompositions}
Assume the hypotheses of
Theorem~\ref{thm:continuous-discrete-small-data}, and let
$\{(u_{1,h}^n,u_{2,h}^n)\}_{n\ge0}$ be the discrete iterates generated by
\eqref{eq:discrete-iterate-2}-\eqref{eq:discrete-iterate-1} with
$\|u_{1,h}^0\|_V\le\rho$.

\begin{enumerate}
\item If the hypotheses of Corollary~\ref{cor:fem-convergence-rate} hold with
$s_1=s_2=s$, then, for every $n\ge0$,
\begin{equation}
\label{eq:total-error-discrete-fixed-point}
\begin{aligned}
    \|u_1-u_{1,h}^n\|_V&+\|u_2-u_{2,h}^n\|_V\\
    &\le
    C h^s
    \left(
        |u_1|_{H^{1+s}(B_R)}
        +
        |u_2|_{H^{1+s}(B_R)}
    \right)+
    (1+\beta_\rho)q_\rho^n\|u_{1,h}-u_{1,h}^0\|_V,
\end{aligned}
\end{equation}
where $C$ is independent of $h$ and $n$.

\item Let $\{(u_1^n,u_2^n)\}_{n\ge0}$ be the continuous iterates generated by
\eqref{eq:continuous-iterate-2}-\eqref{eq:continuous-iterate-1} with
$\|u_1^0\|_V\le\rho$. If the hypotheses of
Corollary~\ref{cor:iterate-rate} also hold, then, for every $n\ge0$,
\begin{equation}
\label{eq:total-error-continuous-discrete-iterates}
\begin{aligned}
    &\|u_1-u_{1,h}^n\|_V+\|u_2-u_{2,h}^n\|_V
    \\ &\le
    (1+\beta_\rho)q_\rho^n\left(\|u_1-u_1^0\|_V+\|u_1^0-u_{1,h}^0\|_V\right)+C_{\rm qo}C_{\rm app}C_s
    \left[
        1+\frac{(1+\beta_\rho)(1+\alpha_\rho)}{1-q_\rho}
    \right]h^s.
\end{aligned}
\end{equation}
\end{enumerate}
\end{corollary}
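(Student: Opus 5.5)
Both decompositions follow from the triangle inequality, inserting an intermediate quantity between the exact solution $(u_1,u_2)$ and the discrete iterate $(u_{1,h}^n,u_{2,h}^n)$. The two parts differ only in which intermediate we insert.

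\emph{Part 1.} We insert the exact discrete nonlinear solution $(u_{1,h},u_{2,h})\in\mathcal B_{\rho,h}$ given by Theorem~\ref{thm:continuous-discrete-small-data}. Then
\[
\|u_1-u_{1,h}^n\|_V+\|u_2-u_{2,h}^n\|_V\le \bigl(\|u_1-u_{1,h}\|_V+\|u_2-u_{2,h}\|_V\bigr)+\bigl(\|u_{1,h}-u_{1,h}^n\|_V+\|u_{2,h}-u_{2,h}^n\|_V\bigr).
\]
We bound the first bracket by Corollary~\ref{cor:fem-convergence-rate} with $s_1=s_2=s$. Its constant is $C_{\rm FE}C_{\rm app}$, which does not depend on $h$ or $n$. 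We bound the second bracket by the discrete fixed-point rate \eqref{eq:discrete-fixed-point-rate} of Remark~\ref{rem:fixed-point-rates}. That estimate applies because $\|u_{1,h}^0\|_V\le\rho$, and it gives exactly $(1+\beta_\rho)q_\rho^n\|u_{1,h}-u_{1,h}^0\|_V$. Adding the two bounds yields \eqref{eq:total-error-discrete-fixed-point}.

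\emph{Part 2.} We insert the continuous iterates $(u_1^n,u_2^n)$ instead:
\[
\|u_1-u_{1,h}^n\|_V+\|u_2-u_{2,h}^n\|_V\le \bigl(\|u_1-u_1^n\|_V+\|u_2-u_2^n\|_V\bigr)+\bigl(\|u_1^n-u_{1,h}^n\|_V+\|u_2^n-u_{2,h}^n\|_V\bigr).
\]
The continuous fixed-point rate \eqref{eq:continuous-fixed-point-rate} bounds the first bracket by $(1+\beta_\rho)q_\rho^n\|u_1-u_1^0\|_V$; it applies because $\|u_1^0\|_V\le\rho$. Corollary~\ref{cor:iterate-rate}, i.e.\ \eqref{eq:total-h-bound}, bounds the second bracket by $(1+\beta_\rho)q_\rho^n\|u_1^0-u_{1,h}^0\|_V$ plus the $h^s$ term with the stated constant. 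Both bounds carry the same prefactor $(1+\beta_\rho)q_\rho^n$, so the two initial-error terms combine into the single factor in \eqref{eq:total-error-continuous-discrete-iterates}.

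There is essentially no obstacle; the proof is bookkeeping. The only point to check is the indexing convention. The iterates $(u_1^n,u_2^n)$ of \eqref{eq:continuous-iterate-2}--\eqref{eq:continuous-iterate-1} must be the same pairs as those in Remark~\ref{rem:fixed-point-rates}, where $u_2^n=\mathcal G_V(u_1^n)$ and $u_1^{n+1}=\mathcal F_V(u_1^n)$. They are, so the rates quoted there apply directly at index $n$, including $n=0$. The same holds in the discrete case. In Part 1 we should also confirm that the constant from Corollary~\ref{cor:fem-convergence-rate} is independent of $n$, which it is because that bound does not involve the iteration at all.
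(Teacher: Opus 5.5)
Your proposal is correct and follows the paper's proof: part 1 uses the triangle inequality through $(u_{1,h},u_{2,h})$, bounded by Corollary~\ref{cor:fem-convergence-rate} and \eqref{eq:discrete-fixed-point-rate}. Part 2 uses the triangle inequality through $(u_1^n,u_2^n)$, bounded by \eqref{eq:continuous-fixed-point-rate} and \eqref{eq:total-h-bound}, whose common prefactor $(1+\beta_\rho)q_\rho^n$ combines the two initial-error terms as you describe.
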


\begin{proof}
For \eqref{eq:total-error-discrete-fixed-point}, the triangle inequality gives
\[
\begin{aligned}
    \|u_1-u_{1,h}^n\|_V+\|u_2-u_{2,h}^n\|_V
    &\le
    \|u_1-u_{1,h}\|_V+\|u_2-u_{2,h}\|_V+
    \|u_{1,h}-u_{1,h}^n\|_V+\|u_{2,h}-u_{2,h}^n\|_V.
\end{aligned}
\]
The first two terms are controlled by Corollary~\ref{cor:fem-convergence-rate},
and the last two terms are controlled by \eqref{eq:discrete-fixed-point-rate}.
This proves \eqref{eq:total-error-discrete-fixed-point}. For \eqref{eq:total-error-continuous-discrete-iterates}, the triangle inequality gives
\[
\begin{aligned}
    \|u_1-u_{1,h}^n\|_V+\|u_2-u_{2,h}^n\|_V
    &\le
    \|u_1-u_1^n\|_V+\|u_2-u_2^n\|_V+
    \|u_1^n-u_{1,h}^n\|_V+\|u_2^n-u_{2,h}^n\|_V.
\end{aligned}
\]
The first two terms are controlled by \eqref{eq:continuous-fixed-point-rate},
and the last two terms are controlled by \eqref{eq:total-h-bound}. This proves
\eqref{eq:total-error-continuous-discrete-iterates}.
\end{proof}

The estimate \eqref{eq:total-error-discrete-fixed-point} separates the finite element
error from the contraction error of the discrete nonlinear solver, while
\eqref{eq:total-error-continuous-discrete-iterates} separates the continuous fixed-point
error from the finite element error propagated between corresponding iterates. We further observe that a sufficient condition for $O(h^s)$ convergence is to take $n \gtrsim \lceil \frac{s\log h}{\log q_\rho}\rceil$.

\section{Numerical results}
\label{sec:numerics}

We present numerical results in dimensions \(d=2,3\), computed using version 6.2.2506-216-g588357f34 of the
NGSolve software package~\cite{netgen} on a Mac-Studio M1 Ultra with 128GB of RAM. In all experiments, curved
boundaries are represented using NGSolve's blending-function approach.
The geometry is approximated using polynomials of degree \(p\), matching
the polynomial degree of the finite element space in the volume.  In all cases, the iterative
fixed point solver is used with initial guess \(u_{1,h}^0=0\).

\subsection{Convergence rates}
The DtN boundary condition used to truncate the physical domain makes it
difficult to construct a convenient analytic solution. We therefore
replace it with a first-order absorbing boundary condition, which
provides a straightforward test of both the finite element discretization
and the nonlinear solver. We also set \(n_1=n_2=1\) and allow the
nonlinearity to act throughout \(B_R\). More precisely, we solve
\begin{align}
    \Delta u_1+\kappa_1^2 u_1
    &=
    -\chi_1 \overline{u_1}u_2+f
    &&\text{in }B_R,
    \label{eq:full-shg-u1-red}\\
    \Delta u_2+\kappa_2^2 u_2
    &=
    -\chi_2 u_1^2+g
    &&\text{in }B_R,\\
    \frac{\partial u_1}{\partial r}-i\kappa_1u_1
    &=
    f_{1,b}
    &&\text{on }\Gamma_R,\\
    \frac{\partial u_2}{\partial r}-i\kappa_2u_2
    &=
    f_{2,b}
    &&\text{on }\Gamma_R.
    \label{eq:imp-red}
\end{align}
Here \(f\), \(g\), \(f_{1,b}\), and \(f_{2,b}\) are prescribed source
terms. The finite element approximation of this problem can be analyzed
in the same manner as the DtN-truncated scattering problem
\eqref{eq:u1strong}-\eqref{eq:u2-dtn-bc}, with the DtN operator
\(T_\kappa\) replaced by multiplication by \(i\kappa\). The corresponding
linear stability estimates are studied by Esterhazy and
Melenk~\cite{EsterhazyMelenk}.
\begin{figure}[t]
    \begin{center}
    \begin{tabular}{cc}
        \resizebox{0.4\textwidth}{!}{\includegraphics{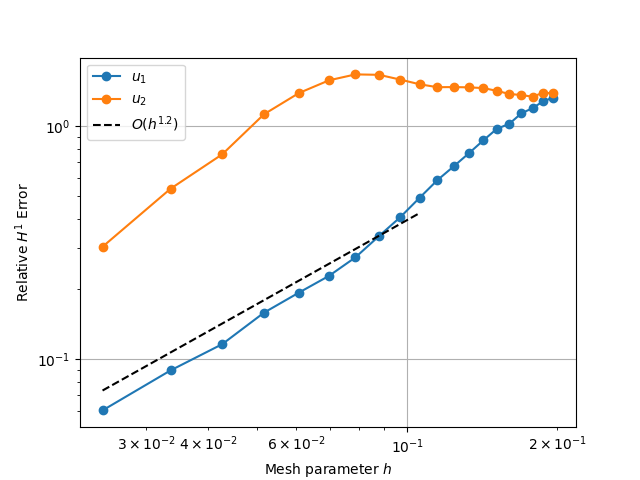}}&
       \resizebox{0.4\textwidth}{!}{\includegraphics{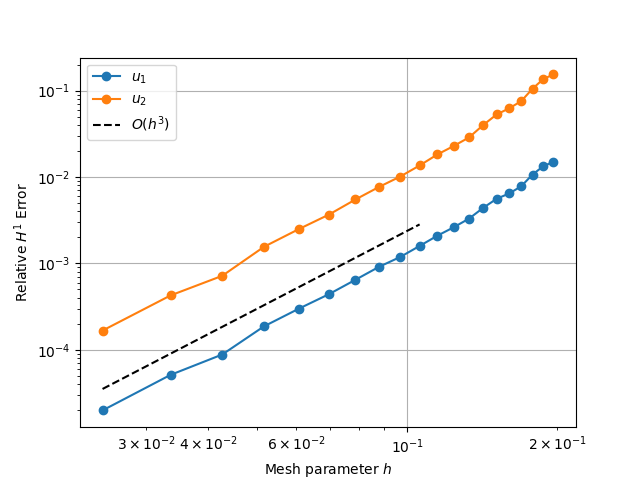}}\\
        a) $p=1$&b) $p=3$
            \end{tabular}
    \end{center}
\caption{Convergence curves for \(u_1\) and \(u_2\) using polynomial
    degrees \(p=1\) (left) and \(p=3\) (right).}
\label{fig2}
\end{figure}
To construct a smooth manufactured solution, we choose
\[
    u_1(x,y)=\exp(i\alpha x),
    \qquad
    u_2(x,y)=\exp(i\beta y).
\]
Substitution into \eqref{eq:full-shg-u1-red}-\eqref{eq:imp-red} gives
\[
    f
    =
    (\kappa_1^2-\alpha^2)\exp(i\alpha x)
    +\chi_1\exp(-i\alpha x)\exp(i\beta y),
\]
and
\[
    g
    =
    (\kappa_2^2-\beta^2)\exp(i\beta y)
    +\chi_2\exp(2i\alpha x).
\]
The corresponding boundary data are
\begin{figure}[t]
    \begin{center}
    \begin{tabular}{cc}
        \resizebox{0.45\textwidth}{!}{\includegraphics{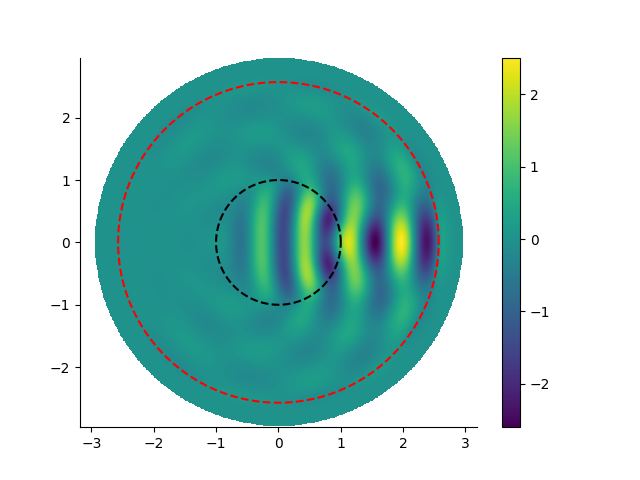}}&
       \resizebox{0.45\textwidth}{!}{\includegraphics{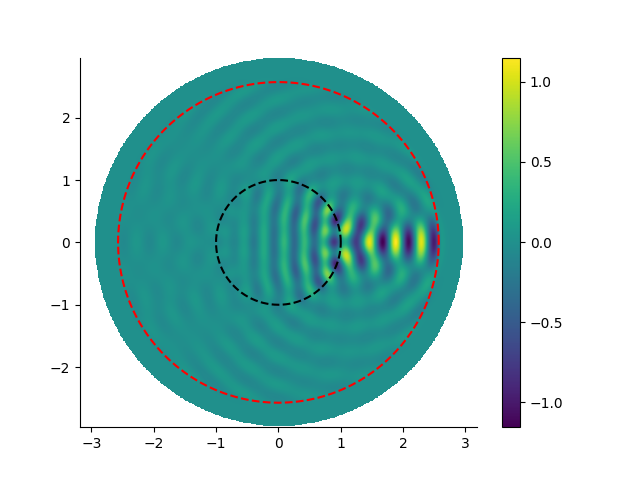}}\\
            \end{tabular}
    \end{center}
\caption{Fields computed using $p=3$ for the scattering problem with PML. We show the real part of the fundamental mode $u_{1,h}^s$ (left) and secondary mode $u_{2,h}$  (right). In each plot the red dashed curve indicates the start of the PML, while the black dashed curve is the
boundary of the scatterer $\partial D$.}
\label{fig3}
\end{figure}
\[
    f_{1,b}
    =
    i(\alpha\nu_1-\kappa_1)\exp(i\alpha x),
    \qquad
    f_{2,b}
    =
    i(\beta\nu_2-\kappa_2)\exp(i\beta y),
\]
where \(\nu=(\nu_1,\nu_2)\) denotes the outward unit normal on
\(\Gamma_R=\partial B_R\). Thus, the prescribed functions \(u_1\) and
\(u_2\) solve \eqref{eq:full-shg-u1-red}-\eqref{eq:imp-red} exactly.
For the experiments reported below, we take
\[
    \kappa_1=8, 
    \qquad
    \kappa_2=16,
    \qquad
    \alpha=7.8,
    \qquad
    \beta=15.8,
    \qquad
    \chi_1=8,
    \qquad
    \chi_2=10.
\]
The computational domain is the disc of radius
\(  R=1+2.5\left(\frac{2\pi}{\kappa_1}\right),
\)
and the nonlinearity is supported throughout the disc. The manufactured
problem includes the independent source $g$ in the second equation, so
Theorem~\ref{thm:continuous-discrete-small-data} and
\eqref{eq:coefficient-smallness} do not apply verbatim. Moreover, the
linear stability and product constants are not available quantitatively,
so an analogous smallness condition cannot be verified a priori for these
parameter values. Nevertheless,
although the manufactured solution is not intended to model a physical
scattering configuration, its finite element approximation still
requires the solution of the fully nonlinear problem. It therefore
provides a useful test of the predicted convergence rates and of the
nonlinear fixed-point solver.
We use 20 requested mesh sizes \(h\), equally spaced over the interval
\(
    \left[\frac{\lambda_2}{16},\frac{\lambda_2}{2}\right]\)
where \(\lambda_2=\frac{2\pi}{\kappa_2}\) is the wavelength of the secondary field.

The fixed-point iteration is terminated when the degree-of-freedom-normalized
\(\ell^2\)-norm of the change in the nodal values of \(u_{1,h}\) between
successive iterates falls below \(10^{-6}\). Since the fixed-point
analysis is formulated in terms of successive updates of \(u_1\), it is
sufficient to monitor \(u_{1,h}\) alone. For \(p=1\), the method requires
25 iterations on the coarsest mesh and 22 iterations on the finest mesh.
When $p=3$, convergence occurred in $22$ iterations on the coarsest mesh and 19 iterations on the finest mesh.
This weak dependence of the iteration count on \(h\) is consistent with
Remark~\ref{rem:fixed-point-rates}, once the mesh is sufficiently fine. The convergence results for \(p=1\) and \(p=3\) are shown in
Fig.~\ref{fig2}. For \(p=1\), the approximation \(u_{2,h}\) remains
inaccurate on the coarser meshes because they do not adequately resolve
a wave with wavenumber \(\kappa_2\). On the finer meshes, the observed
rate is approximately \(O(h^{1.2})\), which is close to the predicted
\(O(h)\) rate. The slight discrepancy is likely explained by the fact
that \(u_{2,h}\) has not yet fully entered the asymptotic regime, even on
the finest mesh. For \(p=3\), the observed cubic convergence agrees with
the theoretical prediction because even the initial mesh adequately
resolves both the fundamental and secondary fields. These experiments
therefore confirm both the predicted finite element convergence rates
and the necessity of the mesh-resolution assumption in the analysis.

\begin{figure}[t]
    \begin{center}
    \begin{tabular}{cc}
        \resizebox{0.45\textwidth}{!}{\includegraphics{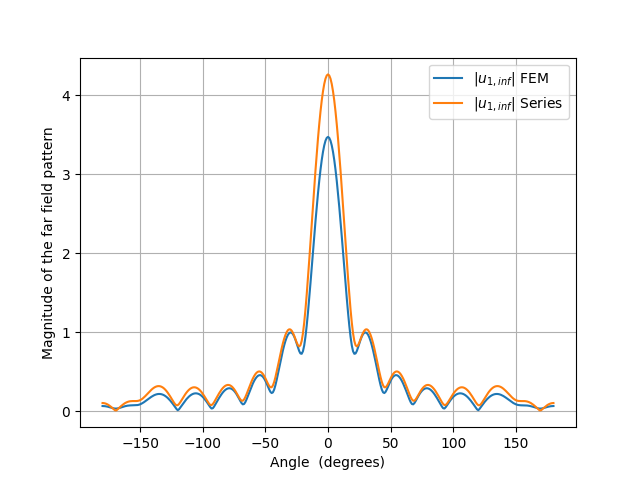}}&
       \resizebox{0.45\textwidth}{!}{\includegraphics{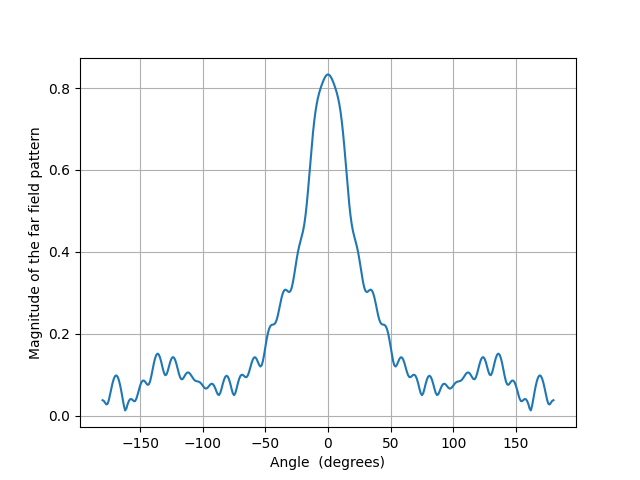}}\\
            \end{tabular}
    \end{center}
\caption{Moduli of the far-field patterns associated with the
    fields in Fig.~\ref{fig3}, plotted as functions of the observation
    angle: \(\lvert u_{1,h,\infty}^s\rvert\) for the fundamental field
    (left) and \(\lvert u_{2,h,\infty}\rvert\) for the secondary field
    (right). For comparison, the left panel also shows the fundamental
    far-field pattern for the corresponding linear problem
    \(\chi_1=\chi_2=0\), computed using a series solution.}
\label{fig4}
\end{figure}

\subsection{Discretization of the scattering problem}
We next compute the scattered fundamental field \(u_1^s\) and the
secondary field \(u_2\). Substituting the decomposition
\eqref{u1tot} into \eqref{eq:full-shg-u1} gives
\begin{align}
    \Delta u_1^s+\kappa_1^2 n_1u_1^s
    &=
    -\chi_1\overline{(u_1^s+u^i)}u_2
    +\kappa_1^2(1-n_1)u^i
    &&\text{in }B_R,
    \label{eq:full-shg-u1-pml}\\
    \Delta u_2+\kappa_2^2 n_2u_2
    &=
    -\chi_2(u_1^s+u^i)^2
    &&\text{in }B_R.
    \label{eq:full-shg-u2-pml}
\end{align}
Because of the difficulty of implementing the DtN map directly in
NGSolve, we instead truncate the computational domain using the standard
cylindrical-coordinate-stretching PML implemented in
NGSolve~\cite{Collino}. This also motivates solving directly for the
scattered field \(u_1^s\), since the incident field should not be
continued into the PML. Let \(a_{j,\mathrm{PML}}\) denote the
PML-modified finite element form corresponding to the \(j\)th field.
The discrete problem is then to find
\(
    (u_{1,h}^s,u_{2,h})\in V_h\times V_h
\)
such that
\begin{align}
    a_{1,\mathrm{PML}}(u_{1,h}^s,\xi_h)
    &=
    \int_{D}
        \chi_1\overline{(u_{1,h}^s+u^i)}
        u_{2,h}\overline{\xi_h}\,dx
    +
    \int_D
        \kappa_1^2(n_1-1)u^i\overline{\xi_h}\,dx
    &&\forall \xi_h\in V_h,
    \label{eq:discrete-shg-u1-pml}\\
    a_{2,\mathrm{PML}}(u_{2,h},\eta_h)
    &=
    \int_{D}
        \chi_2(u_{1,h}^s+u^i)^2\overline{\eta_h}\,dx
    &&\forall \eta_h\in V_h,
    \label{eq:discrete-shg-u2-pml}
\end{align}
where 
\begin{equation}\label{eqn:uinum}u^i(\mathbf{x})=\exp(i\kappa_1 x_1)\end{equation}

Above, the right hand side  integrals in \eqref{eq:discrete-shg-u1-pml}-
\eqref{eq:discrete-shg-u2-pml} are restricted to \(D\) because
\(n_1=1\) and $\chi_j=0$, $j=1,2$ outside $D$.

For this example, we take \(D=B_1\), with
\[
\kappa_1=8,\qquad\kappa_2=16,\qquad 
    \chi_1=20,
    \qquad
    \chi_2=16,
    \qquad
    n_1=1.5,
    \mbox{ and }
    n_2=1.4
    \quad\text{in }D.
\]
The interface \(\Gamma_R\) between the physical domain and the PML is
placed two fundamental wavelengths from \(\partial D\), where
\(
    \lambda_1=\frac{2\pi}{\kappa_1}.
\)
\begin{figure}[t]
    \begin{center}
    \begin{tabular}{cc}
        \resizebox{0.4\textwidth}{!}{\includegraphics{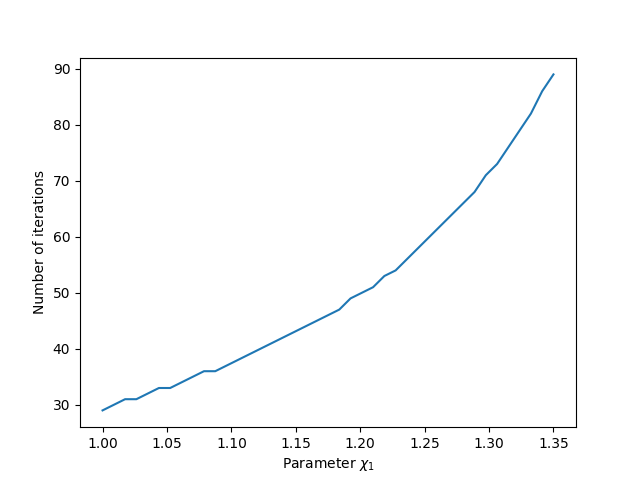}}
            \end{tabular}
    \end{center}
\caption{Number of fixed-point iterates as a function of $\chi_1$ for fixed $\chi_2=10$ in the 3D example.}
\label{fig5}
\end{figure}
The PML has thickness \(\lambda_1/2\), and the NGSolve absorption
parameter is set to \(2i\). A Dirichlet boundary condition is used on the outer PML boundary. We request a triangular mesh with maximum
element diameter \(\lambda_2/8\), where
\(\lambda_2=2\pi/\kappa_2\), and use cubic finite elements
(\(p=3\)). Curved boundaries are represented by a degree-three geometric
mapping. The mesh resolution is determined by the shorter wavelength of
the secondary field \(u_2\), whereas the location and thickness of the
PML must be chosen relative to the longer wavelength of the fundamental
field.

To verify that the PML parameters provide sufficient absorption, we
first consider the corresponding linear problem. We take the incident
plane wave
\(
    u^i=\exp(i\kappa_1x)
\)
and set \(\chi_1=\chi_2=0\). The scattered field \(u_1^s\) then solves
the standard linear scattering problem for a penetrable disc, for which
a series solution is available. Comparing the finite element far-field
pattern $u_{1,h,\infty}^s$ with this series solution $u_{1,\infty}^s$ gives the relative error
\[
    \frac{
        \|u_{1,h,\infty}^s-u_{1,\infty}^s\|_{L^2(0,2\pi)}
    }{
        \|u_{1,\infty}^s\|_{L^2(0,2\pi)}
    }
    =
    8.2\times10^{-6}.
\]
Here the finite element far field pattern is computed using the technique in \cite{sul98} without adaptivity.  Boundary integrals are computed on the curved boundary using NGSolves {\tt Integrate} function with order set to 4.
This agreement indicates that the PML sufficiently absorbs outgoing
waves without noticeably contaminating the solution in the physical
domain. For the full nonlinear problem, the fixed-point iteration reaches the
same stopping criterion used in the preceding experiment after 13
iterations. The resulting fields are shown in Fig.~\ref{fig3}. The
corresponding nonlinear far-field patterns are shown in
Fig.~\ref{fig4}; for comparison, the left panel also includes the
fundamental far-field pattern for the linear problem
\(\chi_1=\chi_2=0\).

\subsection{A three-dimensional example}
We conclude with a three-dimensional example computed using the same
fixed-point iteration as in the preceding experiments. We take the
fundamental wavenumber to be \(\kappa_1=2\), set \(D=B_1\), and choose $n_1=1.5$, $n_2=1.4$.
The incident field is given by (\ref{eqn:uinum}) now viewed as a plane wave in 3D.
The parameters $\chi_j$, $j=1,2$ are constant in $D$ and zero outside. 
 
In order to compute the direct solution of the linear system at each step within the given memory, we used quadratic finite elements
(\(p=2\)) and requested a mesh with maximum element diameter
\(\lambda_2/8\). The PML is also modified compared to the 2D case, beginning one-half of a fundamental wavelength
from \(\partial B_1\) with a thickness equal to one-quarter of that
wavelength. As before, the PML absorption parameter is set to \(2i\). A
homogeneous Dirichlet condition is imposed on the outer boundary of the
PML. Because this PML is both closer to the scatterer and narrower than
the one used in the preceding two-dimensional experiment, reflections
from its outer boundary are expected to have a greater influence on the
computed solution. 

This example is intended to illustrate qualitatively how the number of
fixed-point iterations increases as the strength of the nonlinearity
increases.   We fix \(\chi_2=10\) and record the number of iterations
required to satisfy the convergence criterion for
\(
    \chi_1\in[1,1.35].
\)
The results are shown in Fig.~\ref{fig5}. By
\eqref{eq:small-data-constants}, for a fixed radius $\rho$, the sufficient
contraction factor $q_\rho$ is proportional to
\(\|\chi_1\|_{L^\infty(B_R)}\|\chi_2\|_{L^\infty(B_R)}\). Thus the theory
predicts a deterioration of the contraction bound as \(\chi_1\) increases
with \(\chi_2\) fixed. Since this computation uses a PML rather than the
exact DtN map analyzed above, the comparison is qualitative. Consistently,
the observed number of iterations grows rapidly as \(\chi_1\) increases.
\section{Conclusion}\label{sec:conclusion}
We have analyzed a conforming finite element approximation of a second-harmonic
generation scattering problem modeled by two coupled nonlinear Helmholtz equations.
Using fixed-point techniques, we proved existence and uniqueness of the continuous and discrete
nonlinear solutions under the product-type smallness condition
\eqref{eq:coefficient-smallness}. In the same regime, we derived
a priori finite element error estimates in
the \(H^1\)-norm, and obtained
convergence estimates for the fixed-point iterates used to compute the discrete
solution. The numerical experiments support the predicted convergence rates and illustrate
the behavior of the fixed-point method. They also highlight an important practical
issue for SHG scattering simulations: the size of the computational domain is determined by
the wavelength of the fundamental mode, while the mesh resolution must be fine
enough to resolve the higher-frequency second harmonic. This separation of scales
can significantly increase memory and computational costs. A natural direction for
future work is therefore to use different meshes, polynomial degrees, or adaptive
refinement strategies for the two fields.

\subsubsection*{Declaration of generative AI and AI-assisted technologies in the manuscript preparation process.}
In preparing this work, the authors used generative artificial intelligence tools, specifically OpenAI’s ChatGPT, to identify potential proof strategies, scrutinize arguments for possible errors or gaps, and refine the exposition. All AI-generated outputs were independently evaluated and treated solely as unverified suggestions. The authors retained full control over all decisions concerning the manuscript and accept complete responsibility for its content.


\end{document}